\newtheorem{thm}{Theorem}
\newtheorem{lemma}[thm]{Lemma}
\newtheorem{corollary}[thm]{Corollary}
\theoremstyle{definition}
\begin{document}

\title{Waist Size for Cusps in  Hyperbolic\\ 3-Manifolds II}

\begin{abstract}
The waist size of a cusp in an orientable hyperbolic 3-manifold is the length of the shortest nontrivial curve generated by a parabolic isometry in the maximal cusp boundary. Previously, it was shown that the smallest possible waist size, which is 1, is realized only by the cusp in the figure-eight knot complement.  In this paper, it is proved that the next two smallest waist sizes are realized uniquely for the cusps in the $5_2$ knot complement and the manifold obtained by (2,1)-surgery on the Whitehead link. One  application is an improvement on the universal upper bound for the length of an unknotting tunnel in a 2-cusped hyperbolic 3-manifold. 
\end{abstract}

\author[Colin Adams]{Colin Adams}
\address{Department of Mathematics and Statistics, Bronfman Science Center, Williams College, Williamstown, MA 01267}
\email{Colin.C.Adams@williams.edu}

\date{\today}

\maketitle

\section{Introduction}

In \cite{Adams3}, the waist size of a cusp in a hyperbolic 3-manifold was introduced. It is defined to be the shortest nontrivial curve in the boundary of a maximal cusp in the manifold that is generated by a parabolic isometry. It was shown there that all cusps in hyperbolic 3-manifolds have waist size at least 1 and that the unique cusp in a hyperbolic 3-manifold with waist size 1 is the cusp in the figure-eight knot complement. In this paper, we extend those results to determine the initial segment of manifolds with small waist size. Specifically, we show that the $5_2$ knot and the manifold obtained by (2,1)-surgery on the Whitehead link are the only manifolds other than the figure-eight knot that have waist size at most $\sqrt[4]{2}$. 

There have been a variety of applications of these results based on an unpublished preprint of this article that was previously circulated. See \cite{CFP}, \cite{FKP1}, \cite{FKP2}, \cite{FKP3}, \cite{FS}, \cite{Ich}, \cite{LP1}, \cite{LP2} and \cite{PV}.  These results are relevant to the number of exceptional surgeries since the 6-Theorem of \cite{Agol} and \cite{Lack} shows that a Dehn filling on a cusped hyperbolic 3-manifold that yields a non-hyperbolic manifold must have length at most 6. Hence a lower bound on the length of closed curves in the cusp can improve the lower bound on the number of exceptional fillings possible.  

An additional application is given in Section 4,  where we use the results to obtain an improvement on the universal upper bound for the length of an unknotting tunnel in a 2-cusped hyperbolic 3-manifold. 

Note that although for most knots, the waist size corresponds to the meridian length in a maximal cusp, there are counterexamples to that being true in general. An infinite class of such examples is given in \cite{BH}.  Also note there are many examples of infinite families of manifolds with equal waist sizes. See \cite{Adams3} for details.

 We begin with some background. Given a hyperbolic 3-manifold $M$ and a maximal cusp $C$, the cusp lifts to an infinite set of horoballs in the upper-half-space model of hyperbolic 3-space with disjoint interiors and some points of tangency on their boundaries. This is called the horoball packing corresponding to the cusp in the manifold.  Each horoball touches the boundary of hyperbolic space at a single point.  This point is called the center of the horoball. For convenience, we  choose the point at $\{\infty \}$ to be the center of a horoball denoted $H_{\infty}$ that covers the maximal cusp. We will normalize so that the boundary of this horoball is a plane at height $z=1$. Since the maximal cusp touches itself, every horoball must be tangent to other horoballs. In particular, there must be horoballs tangent to $H_{\infty}$. These horoballs, which have Euclidean diameter 1, are called full-sized horoballs.  When looking down at the $xy$-plane in the upper-half-space model, we will see circles corresponding to the horoballs.  In the case that the maximal cusp has finite volume, there is a $\mathbb{Z} \times \mathbb{Z}$ subgroup of parabolic isometries fixing $\{\infty \}$ in the manifold group, which has a parallelogram in the $xy$-plane for its fundamental domain. We can choose the vertices of the parallelogram to occur at the center of four full-sized balls, all of which are identified by the cusp subgroup. The  parallelogram, together with the circles corresponding to the horoballs is called the cusp diagram.  We choose the parallelogram so that one of its edges is along the shortest translation of a parabolic isometry in the cusp subgroup.  Thus, the Euclidean length of that edge in this normalized model is exactly the waist size. In what follows, that length is denoted w. We will denote the corresponding parabolic translation by $P$.
        
     In the case of a cusp with infinite volume, there is a single parabolic isometry fixing $\{\infty \}$, again denoted $P$, which generates the entire subgroup of the fundamental group fixing $\{\infty \}$. A fundamental domain for that subgroup is a strip of the $xy$-plane between two parallel lines. In either the finite volume or infinite volume case, let $\Gamma$ denote the fundamental group of the manifold realized as a discrete group of isometries of hyperbolic 3-space and let $\Gamma_{\infty}$ denote the subgroup of isometries in the fundamental group that fix $\{\infty \}$.
     
       Note that when we refer to the distance between the centers of horoballs, this is a Euclidean distance in the $xy$-plane that forms the boundary of the upper-half-space model of $\mathbb{H}^3$. The radius or diameter of a horoball is also a Euclidean distance where we have normalized so that the full-sized horoballs have diameter 1.  Any length measured in the horosphere given by the plane $z=1$ is simultaneously a Euclidean and hyperbolic length. There follows a list of relevant lemmas, proofs for which appear in \cite{Adams3}.

\begin{lemma}( 2.6 of \cite{Adams3}). Up to the action of $\Gamma_{\infty}$, every horoball other than $H_{inbfty}$ has a horoball of the same diameter paired to it, which is called its associated horoball.
\end{lemma}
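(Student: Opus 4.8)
The plan is to work with the standard representation of $\Gamma$ as a discrete subgroup of $PSL(2,\mathbb{C})$, available because $M$ is orientable so that every element is an orientation-preserving isometry of $\mathbb{H}^3$. With the normalization already fixed in the text, $H_\infty$ is the horoball based at $\{\infty\}$ whose boundary is the plane $z=1$, and $\Gamma_\infty$ consists of the parabolics fixing $\{\infty\}$, namely the matrices $\begin{pmatrix} 1 & \tau \\ 0 & 1 \end{pmatrix}$ acting as the horizontal translations $z \mapsto z+\tau$. The one computation I need is the classical diameter formula: if $g = \begin{pmatrix} a & b \\ c & d \end{pmatrix} \in \Gamma$ moves $\{\infty\}$, equivalently $c \neq 0$, then the image horoball $g(H_\infty)$ is based at $a/c$ and has Euclidean diameter $1/|c|^2$. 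I would justify this by factoring $g = \begin{pmatrix} 1 & a/c \\ 0 & 1 \end{pmatrix}\begin{pmatrix} 0 & -1/c \\ c & 0 \end{pmatrix}\begin{pmatrix} 1 & d/c \\ 0 & 1 \end{pmatrix}$, in which the two outer factors are horizontal translations fixing $H_\infty$ (before and after) and only the middle inversion-type factor changes the diameter, scaling $H_\infty$ to a horoball of diameter $1/|c|^2$ based at the origin.

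With the formula in hand, the pairing is immediate. Every horoball other than $H_\infty$ is $H = g(H_\infty)$ for some $g \in \Gamma \setminus \Gamma_\infty$, and I would declare its associated horoball to be $g^{-1}(H_\infty)$. Since $g^{-1} = \begin{pmatrix} d & -b \\ -c & a \end{pmatrix}$ has lower-left entry $-c$ with $|-c| = |c|$, the diameter formula assigns $g^{-1}(H_\infty)$ the same Euclidean diameter $1/|c|^2$ as $H$. Geometrically this records that the single isometry $g$ carries $H_\infty$ to $H$ and simultaneously carries the associated horoball to $H_\infty$, so that the two horoballs sit symmetrically with respect to $g$; the matrix computation merely makes that symmetry quantitative. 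Note also that $g^{-1} \notin \Gamma_\infty$ because $g \notin \Gamma_\infty$, so the associated horoball is genuinely one of the horoballs other than $H_\infty$, as required.

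The remaining point, and the reason for the qualifier ``up to the action of $\Gamma_\infty$,'' is well-definedness. The horoball $H$ depends on $g$ only through the left coset $g\Gamma_\infty$, since right multiplication by an element of $\Gamma_\infty$ fixes $H_\infty$ and hence fixes $H$. Replacing $g$ by $gh$ with $h \in \Gamma_\infty$, however, replaces $g^{-1}(H_\infty)$ by $h^{-1}g^{-1}(H_\infty)$, a parabolic translate of the associated horoball. Thus the associated horoball is not canonically a single horoball but a well-defined $\Gamma_\infty$-orbit of horoballs, all of the common diameter $1/|c|^2$, which is exactly the content of the statement. I expect the only real obstacle to be pinning down the diameter formula cleanly, that is, tracking how the inversion factor rescales $H_\infty$; once that is in place, both the equality of diameters and the $\Gamma_\infty$-invariance follow formally from passing between $g$ and $g^{-1}$.
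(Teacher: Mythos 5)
Your proposal is correct, and it is essentially the standard argument: the pairing $H=g(H_\infty)\leftrightarrow g^{-1}(H_\infty)$, well-defined only up to the left coset $g\Gamma_\infty$, which is precisely the ``up to the action of $\Gamma_\infty$'' qualifier. Note that this paper states the lemma without proof, deferring to Lemma 2.6 of \cite{Adams3}; the argument there uses the same pairing, with equality of diameters obtained from the isometry-invariance of the hyperbolic distance between $H_\infty$ and a horoball of diameter $d$ (namely $-\ln d$) rather than from the explicit $1/|c|^2$ entry formula, but the two computations are interchangeable.
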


\begin{lemma}(2.7 of \cite{Adams3}). If there exists a horoball $H_1$ of diameter $b$ with center a distance $c$ from the center of a full-sized horoball $H_2$ in the horoball packing, then there exists a horoball $H_3$ of diameter $\frac{b}{c^2}$, with center a distance $\frac{1}{c}$ from the full-sized ball associated to $H_2$.
\end{lemma}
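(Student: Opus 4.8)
The plan is to produce $H_3$ and the associated ball explicitly, by applying a single inversion to the pair $\{H_1,H_2\}$ and reading off Euclidean data. First I would normalize coordinates. Since $H_2$ is full-sized it has diameter $1$ and is tangent to $H_\infty$, so after translating coordinates I may assume $H_2$ is centered at the origin and tangent to $H_\infty$ at the point $(0,0,1)$; and after rotating $\mathbb{H}^3$ about the vertical axis through the origin (which fixes both $H_\infty$ and $H_2$) I may assume the center of $H_1$ lies at the positive real point $c$, with $H_1$ of diameter $b$. These are only changes of coordinates, so they preserve every diameter and every center-to-center distance.

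Next I would identify the three relevant horoballs group-theoretically. Because $H_2$ lies in the packing there is $g\in\Gamma$ with $g(H_\infty)=H_2$; by the proof of Lemma 2.6 the full-sized ball associated to $H_2$ is $H_2':=g^{-1}(H_\infty)$ (it has the same diameter $1$ as $H_2$), and I set $H_3:=g^{-1}(H_1)$, which lies in the packing since $g^{-1}\in\Gamma$. The two quantities I must then compute are the diameter of $H_3$ and the Euclidean distance from the center of $H_3$ to the center of $H_2'$.

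The key step, and the one requiring care, is that I may carry out this computation with a convenient isometry in place of the genuine $g^{-1}$. Let $\sigma$ be the involution $z\mapsto -1/z$ extended to $\mathbb{H}^3$; it fixes the tangency point $(0,0,1)$ and interchanges $H_\infty$ and $H_2$, so it sends $H_2$ to $H_\infty$ exactly as $g^{-1}$ does. Hence $\sigma g$ fixes $H_\infty$, so $\sigma=\delta g^{-1}$ for some $\delta\in\mathrm{Stab}(H_\infty)$, and $\delta$ acts on the boundary plane $\mathbb{C}$ as a Euclidean isometry $z\mapsto \alpha z+\beta$ with $|\alpha|=1$. Consequently $\sigma(H_1)=\delta(H_3)$ and $\sigma(H_\infty)=\delta(H_2')$ have the same diameters and the same mutual center-distance as $H_3$ and $H_2'$, so it suffices to compute $\sigma(H_1)$ and $\sigma(H_\infty)$. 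The subtlety is precisely that the associated ball is only defined up to $\Gamma_\infty$; the argument above shows the distance I want is independent of that ambiguity and is faithfully recorded by the involution. This bookkeeping is where I expect the only real obstacle to lie.

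Finally I would compute directly. Using the Poincar\'e extension of $\sigma$, the solid horoball $H_1$, cut out in $\mathbb{H}^3$ by $|z-c|^2+t^2\le bt$, is carried (after a short substitution and completion of the square) to a solid horoball based at the point $-1/c$ of Euclidean diameter $b/c^2$, while $\sigma(H_\infty)=H_2$ remains centered at the origin. Thus the center of $\sigma(H_1)$ sits at distance $1/c$ from the center of $\sigma(H_\infty)$ and has diameter $b/c^2$; by the previous paragraph these equal, respectively, the distance of $H_3$ from the associated ball $H_2'$ and the diameter of $H_3$, which is exactly the claim. The explicit inversion here is routine, so no difficulty is expected beyond the reduction carried out above.
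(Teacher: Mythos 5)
Your proof is correct, and it is essentially the standard argument: the paper itself does not prove this lemma but defers it to Lemma 2.7 of \cite{Adams3}, where the proof is exactly this transfer of $H_1$ and $H_\infty$ by the isometry carrying $H_2$ to $H_\infty$, with the Euclidean data read off from the boundary Möbius action. Your factorization of that isometry as an explicit inversion $z\mapsto -1/z$ composed with an element of the stabilizer of $H_\infty$ is a clean way to justify the diameter formula $b/c^2$ and the distance $1/c$, and your handling of the $\Gamma_\infty$-ambiguity in the associated ball is sound.
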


\begin{lemma}( 2.8 of \cite{Adams3}). Given a horoball $H$ of diameter $k$ in the horoball packing, both it and its associated horoball $H'$ have a pair of horoballs on either side of them with centers a distance $\frac{k}{w}$ from the centers of $H$ and $H'$, and with diameter $(\frac{k}{w})^2$. The line segment between the centers of the pair associated to $H$ passes through the center of $H$ and is parallel to the line segment between the centers of the pair associated to H' which passes through the center of $H'$.
\end{lemma}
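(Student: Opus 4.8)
The plan is to pin down each of the four required horoballs as the image of $H_\infty$ under a conjugate of the waist translation $P$, so that membership in the packing is automatic. Since the packing is the $\Gamma$-orbit of $H_\infty$, I would choose $\gamma\in\Gamma$ with $\gamma(H_\infty)=H$; by the pairing lemma above one may take its associated horoball to be $H'=\gamma^{-1}(H_\infty)$, again of diameter $k$. Representing $\gamma=\left(\begin{smallmatrix}a&b\\c&d\end{smallmatrix}\right)$ in $\mathrm{PSL}_2(\mathbb{C})$, and normalizing $\partial H_\infty$ to height $1$ so that $\mathrm{diam}\,\mu(H_\infty)=1/|c_\mu|^2$, the diameter hypothesis forces $|c|^2=1/k$, while the centers of $H$ and $H'$ are $p=a/c$ and $p'=-d/c$.

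The four candidate horoballs are then the images
\[
N_\pm=(\gamma P^{\pm1}\gamma^{-1})(H_\infty)\quad\text{and}\quad N'_\pm=(\gamma^{-1}P^{\pm1}\gamma)(H_\infty).
\]
The conceptual point on which the whole argument turns is that $\gamma P^{\pm1}\gamma^{-1}$ and $\gamma^{-1}P^{\pm1}\gamma$ are conjugates of the parabolic $P$ and hence again lie in $\Gamma$; therefore each $N_\pm$ and $N'_\pm$ is a bona fide horoball of the packing. This is exactly why I would work with the honest group element $\gamma$ (and so with the associated ball $H'$) rather than with the inversion interchanging $H$ and $H_\infty$, which in general does not belong to $\Gamma$ and would only produce spheres with no guarantee of lying in the packing.

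The substance of the proof is the computation of these four images, which I expect to be the one delicate step, since conjugation by $\gamma$ distorts Euclidean distances badly near the pole of $\gamma$. Multiplying the matrices out, $\gamma P^{\pm1}\gamma^{-1}$ and $\gamma^{-1}P^{\pm1}\gamma$ each have lower-left entry of modulus $|c|^2w=w/k$, so by the diameter rule each of $N_\pm,N'_\pm$ has diameter $(w/k)^{-2}=k^2/w^2$, as required. Tracking the centers, one finds $N_\pm$ located at $p\pm\tfrac{1}{wc^2}$ and $N'_\pm$ located at $p'\pm\tfrac{1}{wc^2}$, so in each case the displacement from the relevant center has modulus $1/(|c|^2w)=k/w$.

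The three geometric assertions then fall out of these formulas. The members of each pair arise from $P$ and from $P^{-1}$, so their centers are placed symmetrically as $p\pm\tfrac{1}{wc^2}$ (respectively $p'\pm\tfrac{1}{wc^2}$); hence they lie on opposite sides of $H$ (respectively $H'$) and the segment joining them passes through the center of $H$ (respectively $H'$). Finally, both displacement vectors equal $\pm\tfrac{1}{wc^2}$, so the two segments are not merely parallel but translates of one another, which yields the last clause of the statement.
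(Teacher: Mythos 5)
Your proof is correct. Note that the paper itself does not prove this lemma --- it is quoted from \cite{Adams3} with the proof deferred there --- but your argument is essentially the standard one, and it is exactly the strategy the paper uses for the closely analogous $1/w^3$-balls lemma in Section 2: conjugate $P^{\pm 1}$ by the isometry $\gamma$ carrying $H_\infty$ to $H$ (equivalently, push the $P$-translates of $H'$ forward by $\gamma$), so that the four new balls are automatically in the packing. The only difference is that you compute the M\"obius distortion explicitly with $\mathrm{PSL}_2(\mathbb{C})$ matrices, where the paper would invoke the distortion statement of Lemma 2 (2.7 of \cite{Adams3}); your computation checks out, including the key point that both displacement vectors equal $\mp 1/(c^2\omega)$ (with $\omega$ the complex translation of $P$, $|\omega|=w$), which gives the parallelism. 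The only quibble is notational: in the centers $p\pm\tfrac{1}{wc^2}$ the denominator should carry the complex number $\omega$ rather than its modulus $w$, but this does not affect the distances or the parallelism.
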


\begin{lemma} \label{biglemma}(2.9 of \cite{Adams3}).  Given a horoball packing corresponding to a maximal cusp, up to the action of the cusp subgroup:

\begin{enumerate}

\item There exist at least two distinct full-sized horoballs, and full-sized horoballs always come in pairs.

\item Each full-sized ball has at least two balls tangent to it, called $1/w$-balls, each of diameter $\frac{1}{w^2}$, such that the full-sized ball is centered at the midpoint of a line segment of length $\frac{2}{w}$ in the $xy$-plane and such that the endpoints of the line segment are the centers of these two $1/w$-balls. The line segments corresponding to the $1/w$-balls for a pair of associated full-sized balls are parallel.

\item Each $1/w$-ball has a ball tangent to it, called a $1/e$-ball. Its center is a distance $\frac{1}{e}$ from the full-sized ball that the $1/w$-ball is tangent to and a distance $\frac{1}{w^2e}$ from the center of the $1/w$-ball.

\end{enumerate}

\end{lemma}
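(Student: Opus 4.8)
The plan is to derive all three parts from one elementary fact about horoballs in the upper-half-space model: two horoballs of Euclidean diameters $d_1$ and $d_2$ whose centers are a Euclidean distance $D$ apart have disjoint interiors exactly when $D \ge \sqrt{d_1 d_2}$, with tangency precisely when $D = \sqrt{d_1 d_2}$; and a ball of diameter $d$ is tangent to $H_\infty$ (normalized to height $1$) exactly when $d = 1$. Equivalently, the quantity $L(X,Y) = D(X,Y)/\sqrt{\mathrm{diam}(X)\,\mathrm{diam}(Y)}$, with the evident convention against $H_\infty$, is invariant under the isometries in $\Gamma$ and equals $1$ exactly on tangent pairs; this invariant will carry out all of the bookkeeping. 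Two standing observations will be used throughout: a horoball is full-sized if and only if it is tangent to $H_\infty$, and carrying any horoball of the packing to $\{\infty\}$ by an element of $\Gamma$ conjugates $\Gamma_\infty$ to the stabilizer of another parabolic point of the same cusp, and hence leaves the waist size $w$ unchanged.

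For part (1), maximality of the cusp forces $H_\infty$ to be tangent to some horoball $H$, which therefore has diameter $1$ and is full-sized. By Lemma 2.6 its associate $H'$ has the same diameter, so $H'$ is again full-sized, giving the pairing. To see $H \ne H'$, I would note that if $g \in \Gamma$ satisfies $g(H_\infty) = H$, then $H' = g^{-1}(H_\infty)$ and an equality $H = H'$ would force the centers $a/c$ and $-d/c$ of $H$ and $H'$ to agree, i.e.\ $g$ would have trace zero and be an elliptic involution, impossible in the torsion-free group $\Gamma$; the analogous discreteness check rules out $H'$ lying in the $\Gamma_\infty$-orbit of $H$. For part (2), I would apply Lemma 2.8 to $H$ with $k = 1$: it produces balls on either side of $H$ at center-distance $1/w$ and diameter $1/w^2$, collinear through the center of $H$, so that $H$ sits at the midpoint of a segment of length $2/w$ joining their centers, and the parallelism for an associated pair is exactly the final assertion of Lemma 2.8. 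That these balls are tangent to $H$ is the tangency relation, since $\sqrt{1 \cdot 1/w^2} = 1/w$.

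Part (3) is the crux. Fix a $1/w$-ball $B$ (diameter $1/w^2$) tangent to the full-sized ball $H$, and choose $g \in \Gamma$ with $g(B) = H_\infty$. Since $g$ carries the packing to itself and preserves $w$, the transported configuration again has $H_\infty = g(B)$ tangent to full-sized balls; one of them is $g(H)$, and by part (2) the shortest parabolic $P'$ fixing the new infinity yields a second full-sized ball $F := P'\,g(H)$ at center-distance $w$ from $g(H)$. Setting $A := g^{-1}(F)$ then produces a ball tangent to $B$ with $A \ne H$. Writing $\delta$ for the diameter of $A$ and invoking the invariance of $L$, one gets $L(B,A) = L(H_\infty, F) = 1$ and $L(H,A) = L(g(H), F) = w$, whence $D(A,B) = \sqrt{\delta}/w$ and $D(A,H) = w\sqrt{\delta}$. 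Defining $e$ by $1/e := D(A,H)$ gives $\sqrt{\delta} = 1/(we)$ and $D(A,B) = \sqrt{\delta}/w = 1/(w^2 e)$, which are precisely the two asserted distances.

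I expect the main obstacle to be exactly this last step: producing an honest second ball tangent to $B$ and certifying that its distances to $H$ and to $B$ stand in the exact ratio $w^2$. The conceptual key is that the waist size is unchanged when $B$ is sent to $\{\infty\}$, so that a full-sized ball at distance exactly $w$ is available in the transported frame; the remaining distance computations are then forced by the isometry-invariance of $L$ together with the tangency relation, and defining $e$ through $D(A,H)$ makes everything close up. Parts (1) and (2), by contrast, should be essentially immediate consequences of Lemmas 2.6 and 2.8.
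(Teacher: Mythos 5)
The paper states this lemma without proof, explicitly deferring to \cite{Adams3}, so there is no in-paper argument to compare against; judged on its own, your reconstruction is correct and uses essentially the intended machinery: part (1) from maximality of the cusp together with Lemma 1 and torsion-freeness of $\Gamma$, part (2) as the $k=1$ case of Lemma 3 plus the tangency criterion $D=\sqrt{d_1d_2}$, and part (3) by sending the $1/w$-ball to $\{\infty\}$ and applying the shortest parabolic there --- the same device the paper itself uses to produce the $1/w^3$-balls in Lemma 5, with your invariant $L$ just repackaging Lemma 2.
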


\section{Additional Horoballs}

      We now build on Lemma \ref{biglemma} and demonstrate that the pattern of horoballs that we already know to exist forces the existence of a variety of additional horoballs.

\begin{lemma}($1/w^3$-balls) For the cusp diagram of any maximal cusp in a hyperbolic manifold M, there exist two horoballs to either side of each $1/w$-ball, with centers a distance $\frac{1}{w^3}$ from the center of the $1/w$-ball and diameters $\frac{1}{w^6}$. One of them touches the $1/e$-ball that touches the $1/w$-ball.

\end{lemma}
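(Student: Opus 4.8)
The plan is to get the two new horoballs for free from the earlier result that produces, for every horoball of diameter $k$, a pair of horoballs on either side at distance $\frac{k}{w}$ and diameter $\left(\frac{k}{w}\right)^2$ (2.8 of \cite{Adams3}). I would apply it with the horoball $H$ taken to be a $1/w$-ball, so that $k=\frac{1}{w^2}$. The resulting pair then has centers at distance $\frac{k}{w}=\frac{1}{w^3}$ from the center of the $1/w$-ball and diameter $\left(\frac{k}{w}\right)^2=\frac{1}{w^6}$, which is precisely the pair asserted to exist. That lemma also records that the segment joining the two centers passes through the center of the $1/w$-ball and is parallel to the analogous segment of the associated $1/w$-ball; together with part (2) of Lemma \ref{biglemma} this places the two new balls along the direction of $P$, that is, along the line through the full-sized ball and the $1/w$-ball.

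The substantive claim is that one of these two balls is tangent to the $1/e$-ball tangent to the same $1/w$-ball. For this I would pass to the normalized model, putting the full-sized ball at the origin with $P$ along the $x$-axis, so the $1/w$-ball is centered at $\left(\frac{1}{w},0\right)$ with diameter $\frac{1}{w^2}$ and the two new balls are centered at $\left(\frac{1}{w}\pm\frac{1}{w^3},0\right)$ with diameter $\frac{1}{w^6}$. The one tool I would use repeatedly is the tangency criterion that two horoballs of diameters $d_1,d_2$ whose centers are a horizontal distance $s$ apart are tangent exactly when $s^2=d_1 d_2$. Applying it to the $1/e$-ball and the $1/w$-ball, together with the datum from part (3) of Lemma \ref{biglemma} that their centers are a distance $\frac{1}{w^2 e}$ apart, shows that the $1/e$-ball has diameter $\frac{1}{w^2 e^2}$. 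The asserted tangency then reduces to verifying a single instance of $s^2=d_1 d_2$ between the appropriate new ball (diameter $\frac{1}{w^6}$) and the $1/e$-ball.

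The hard part is pinning down the exact position of the $1/e$-ball, for the two distances recorded in Lemma \ref{biglemma}(3), namely $\frac{1}{e}$ from the full-sized ball and $\frac{1}{w^2 e}$ from the $1/w$-ball, fix its center only up to reflection across the $x$-axis, and for a generic off-axis placement the tangency genuinely fails. What rescues the argument is that the $1/e$-ball is in fact collinear with the full-sized ball and the $1/w$-ball; this is the key input I would draw from the construction in \cite{Adams3}, where the $1/e$-ball arises as the image of a neighboring full-sized ball's horoball under the involution interchanging $H_\infty$ with the full-sized ball, an isometry that preserves tangency and carries this axis to itself. Granting the collinearity, the center of the $1/e$-ball is forced to $\left(\frac{w}{w^2-1},0\right)$ with diameter $\frac{1}{(w^2-1)^2}$, equivalently $e=\frac{w^2-1}{w}$, and then the horizontal distance to the center of the ball at $\left(\frac{1}{w}+\frac{1}{w^3},0\right)$ is $\frac{1}{w^3(w^2-1)}$, whose square $\frac{1}{w^6(w^2-1)^2}$ equals the product $\frac{1}{w^6}\cdot\frac{1}{(w^2-1)^2}$ of the two diameters; this is exactly the required tangency, and the new ball on the same side as the $1/e$-ball is the one that touches it. I expect establishing and correctly invoking this collinearity, equivalently the exact value of $e$, to be the main obstacle, with the concluding tangency a short computation once the positions are fixed.
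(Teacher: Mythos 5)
The existence half of your argument is correct and is essentially the paper's: quoting Lemma 2.8 of \cite{Adams3} with $k=1/w^2$ gives the pair at distance $1/w^3$ and diameter $1/w^6$, and the paper's proof simply unwinds that same isometry $J$ (taking the $1/w$-ball to $H_\infty$ and $H_\infty$ to its associated ball) applied to the $P^{\pm1}$-translates of the $1/w$-ball.

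The tangency half contains a genuine error. You force the $1/e$-ball onto the line through the full-sized ball and the $1/w$-ball and conclude $e=(w^2-1)/w$. But $e$ is an independent parameter of the cusp diagram, not a function of $w$: all of Section 2 consists of two-variable inequalities in the $we$-plane, and the extremal examples violate your formula (the figure-eight knot has $w=e=1$, the $5_2$ knot has $e=w^2$). The collinearity is false in general --- $e$ is by definition the distance from the $1/w$-ball to the $P^{\pm1}$-translate of the full-sized ball it touches, and the angle $\theta$ that this segment makes with the axis through the full-sized ball and the $1/w$-ball is a genuine degree of freedom; it is exactly the angle carried around in Figure \ref{distances} and in the law-of-cosines computations, and likewise the segment through the two $1/w^3$-balls need not lie along that axis. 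The tangency requires no coordinates at all: the relevant $1/w^3$-ball is $J(P^{\pm1}(W))$ and the relevant $1/e$-ball is $J(P^{\pm1}(F))$, where $F$ is the full-sized ball tangent to the $1/w$-ball $W$; since $P^{\pm1}(F)$ is tangent to $P^{\pm1}(W)$ and $J$ is an isometry of the packing, their images are tangent. Your computation closes up only because you fed in a value of $e$ that does not hold in general.
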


\begin{proof}There exists an isometry $J$ of the horoball pattern that takes a given $1/w$-ball to $H_\infty$ and $H_\infty$ to the horoball associated to the $1/w$-ball. By Lemma 2, the translates by $P$ and $P^{-1}$ of the $1/w$-ball will be sent by $J$ to balls of radius $\frac{1}{w^6}$, each centered a distance $1/w$ from the center of the associated ball, on either side of it. Similarly the translates of the associated ball by $P$ and $P^{-1}$ are sent by $J^{-1}$ to balls of radius $\frac{1}{w^6}$, each centered a distance $1/w$ from the center of the $1/w$-ball.
\end{proof}

     Up to this point, we have seen that there exist full-sized balls, $1/w$-balls, $1/e$-balls and $1/w^3$-balls. Particular values for $w$ and $e$ determine the centers and diameters of all of these balls. As in Figure \ref{distances}, we define the Euclidean distances $e, v, y, m, p, s$ and $k$ between the centers of pairs of horoballs that are known to exist. Note that if any one of these distances equals 0, the two horoballs centered at the endpoints of the corresponding line segment are identical.

\begin{figure}{}
\begin{center}
\includegraphics[scale= .8]{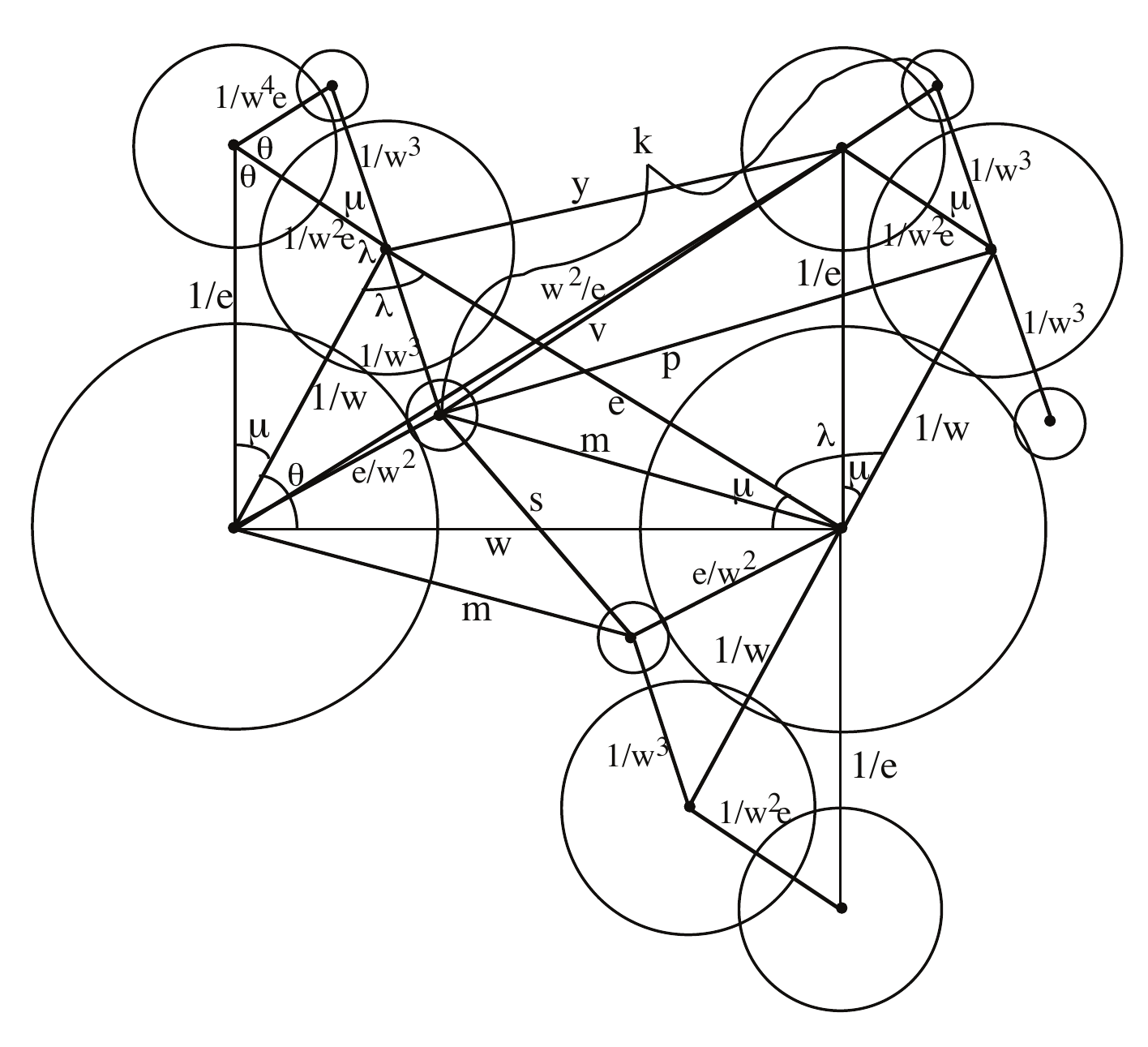}
\caption{A choice of $w$ and $e$ determine all horoballs in this figure.}
\label{distances}
\end{center}
\end{figure}

     On the other hand, if one of the defined distances is not 0, then it must be large enough to ensure that the corresponding pair of  horoballs do not overlap in their interiors. Thus we generate a set of inequalities in $w$ and $e$ that must be satisfied. In the following lemmas, we gather together a set of these inequalities that will prove useful.

\begin{lemma}(Upper $e$ inequality) For any cusp in a hyperbolic manifold, 
$w^4-e^2w^2+1 \geq 0$.
\end{lemma}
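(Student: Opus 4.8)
The plan is to convert the statement into the assertion that two particular horoballs of the packing have disjoint interiors, and then to read the inequality off the resulting algebraic condition. The one tool needed is the following elementary fact, which I would record first: two horoballs tangent to the plane $z=0$ with Euclidean diameters $d_1$ and $d_2$ whose centers lie a Euclidean distance $c$ apart in the $xy$-plane have disjoint interiors if and only if $c^2 \ge d_1 d_2$, with equality precisely when they are tangent. This comes from writing the horoballs as Euclidean balls of radii $d_1/2$ and $d_2/2$ centered at heights $d_1/2$ and $d_2/2$ and comparing the distance between their Euclidean centers with the sum of the radii. Every inequality in this section, the present one included, is an instance of $c^2 \ge d_1 d_2$ for a suitable pair.

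Next I would normalize. Place a full-sized ball $B_0$ with center at the origin, so that $H_\infty$ is the plane $z=1$ and $B_0$ has diameter $1$, and orient the $x$-axis along the segment of part (2) of Lemma \ref{biglemma} (which coincides with the translation direction of $P$). Then the two $1/w$-balls sit at $(\pm 1/w, 0)$ with diameter $1/w^2$, and the adjacent full-sized balls $P^{\pm 1}(B_0)$ sit at $(\pm w, 0)$. Using the two distances in part (3) of Lemma \ref{biglemma} — the $1/e$-ball has center a distance $1/e$ from the center of $B_0$ and a distance $1/(w^2 e)$ from the center of the $1/w$-ball at $(1/w,0)$ — together with its diameter $1/(w^2 e^2)$, I would solve the two distance equations to pin down the center $(x,y)$ of the $1/e$-ball as an explicit function of $w$ and $e$: subtracting the equations gives $x$ at once, and then $x^2 + y^2 = 1/e^2$ gives $y$.

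With every relevant center and diameter now written in terms of $w$ and $e$, the heart of the proof is to identify the single pair of forced horoballs whose condition $c^2 \ge d_1 d_2$ collapses, after clearing denominators, to $w^4 - e^2 w^2 + 1 \ge 0$ (equivalently $e^2 \le w^2 + 1/w^2$). I would test the natural candidates — the $1/e$-ball against the adjacent full-sized balls, against the opposite $1/w$-ball, against the parabolic translates of the $1/w$-balls, and against the $1/w^3$-balls produced in the preceding lemma — computing each center-to-center distance either directly in coordinates or by the law of cosines, and discarding those that reduce to inequalities made vacuous by $w \ge 1$ and $e \ge 1$.

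I expect the main obstacle to be exactly this last step. Most pairs yield inequalities that hold automatically, so the real work is in locating the genuinely binding pair and getting its relative geometry right. The delicate feature is that the target bound $e^2 \le w^2 + 1/w^2$ is strictly stronger than the existence range $w - 1/w \le e \le w + 1/w$ forced merely by the triangle that locates the $1/e$-ball; indeed the on-axis $1/w^3$-ball only reproduces the weaker bound $e \le w + 1/w$, so the binding pair cannot be on-axis. Thus the decisive constraint must involve the off-axis horoballs whose angular placement the earlier lemmas pin down only through their collinearity and ``parallel'' clauses — most plausibly the companion $1/w^3$-ball of the $1/w$-ball, or a horoball attached to the associated full-sized ball — and settling that angle is where the care is required. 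Once the binding pair and its included angle are fixed, the reduction to $w^4 - e^2 w^2 + 1 \ge 0$ is a routine algebraic simplification.
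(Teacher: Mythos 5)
There is a genuine gap here, and you in fact flagged it yourself: your entire strategy is to realize the inequality as a horoball-disjointness condition $c^2 \ge d_1 d_2$ for some pair of forced horoballs, you correctly compute that none of the on-axis candidates gives anything as strong as $e^2 \le w^2 + 1/w^2$, and you then leave the identification of the ``binding pair'' unresolved as the main obstacle. The reason you could not find it is that the upper $e$ inequality is not a packing inequality at all --- those are the ones collected in the lemma that follows. It comes instead from the minimality built into the definition of $e$: $e$ is the \emph{shorter} of the two distances from the center of a $1/w$-ball to the centers of the translates by $P$ and by $P^{-1}$ of the full-sized ball it touches. Those two translates lie in opposite directions from the center of that full-sized ball, at distance $w$, while the $1/w$-ball's center lies at distance $1/w$; the two included angles at the full-sized ball's center are $\theta$ and $\pi - \theta$, so choosing the nearer translate forces $\theta \le \pi/2$. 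The law of cosines then gives $e^2 = w^2 + 1/w^2 - 2\cos\theta \le w^2 + 1/w^2$ since $\cos\theta \ge 0$, which after multiplying by $w^2$ is exactly $w^4 - e^2w^2 + 1 \ge 0$.

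Your preliminary material is sound and not wasted --- the criterion that two horoballs of diameters $d_1, d_2$ with centers $c$ apart are disjoint iff $c^2 \ge d_1 d_2$, and the coordinate placement via Lemma \ref{biglemma}, are precisely what drive the $v$-, $y$-, $s$-, $m$-, $p$- and $k$-inequalities of the next lemma. But for this particular statement the decisive input is the ``shortest distance'' clause in the definition of $e$, i.e.\ an acute-angle observation rather than a non-overlap condition, and without that idea the proposed search over candidate pairs does not close.
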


\begin{proof} By definition, $e$ is the shortest distance between a $1/w$-ball and the translation by $P$ or $P^{-1}$ of the full-sized ball that it touches. Hence, $\theta$ is at most $\pi/2$. The law of cosines yields the inequality.
\end{proof}

\begin{lemma}For each labelled line segment between centers of horoballs depicted in Figure \ref{distances} that has non-zero length, the following inequality must hold:

\begin{enumerate}

\item (Lower $e$ inequality) $ew -1 \geq 0$

\item ($v$-inequality) $w^{12} + w^8(-1-e^4) + w^6(e^6-e^2) + w^4(2-2e^4) + w^2(3e^2 )-2 \geq 0$

\item ($y$-inequality) $e^4w^2 + w^6 -e^2w^4 -w^2 -e^2 \geq 0$

\item ($s$-inequality)  $w^{14}- 2e^2w^{12 }+ w^{10}(2e^4-2)  + 2w^6 -1 \geq 0$

\item ($m$-inequality) $e^4w^2 + w^6 - e^2w^4 -w^2 - e^2 \geq 0$

\item ($p$-inequality) $2w^{10} +2w^2 +e^4w^6 -2e^2w^4 -2e^2w^8 -1 \geq 0$

\item ($k$-inequality) $3w^{14}  - 4w^{12}e^2 +2w^{10}e^4 - 4w^8e^2 + 6w^6 -1 \geq 0$

\end{enumerate}

\end{lemma}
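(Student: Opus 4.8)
The plan is to derive every one of these inequalities from a single elementary fact about horoballs resting on the boundary plane: two horoballs of Euclidean diameters $d_1$ and $d_2$ whose centers are a Euclidean distance $c$ apart in the $xy$-plane have disjoint interiors if and only if $c^2 \geq d_1 d_2$, with equality exactly when they are tangent. This is immediate from the upper-half-space picture, where such horoballs are Euclidean spheres of radii $d_i/2$ centered at height $d_i/2$: the non-overlap condition $c^2 + (d_1-d_2)^2/4 \geq (d_1+d_2)^2/4$ collapses to $c^2 \geq d_1 d_2$. Since the horoballs of a cusp packing have disjoint interiors, each labeled segment of nonzero length in Figure \ref{distances} yields exactly this inequality for the pair of horoballs at its endpoints. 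The lower $e$ inequality is the prototype: the segment $e$ joins a $1/w$-ball (diameter $1/w^2$) to the translate $P(F)$ of the full-sized ball it touches (diameter $1$), so $e^2 \geq 1\cdot \tfrac{1}{w^2}$, and since $ew+1>0$ this is equivalent to $ew-1\geq 0$.

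To carry this out for the remaining six segments, I first normalize coordinates with the full-sized ball $F$ at the origin and $P$ a horizontal translation of length $w$, and I record the diameter of every horoball appearing in the figure: full-sized balls have diameter $1$, the $1/w$-balls have diameter $1/w^2$ (Lemma \ref{biglemma}(2)), the $1/w^3$-balls have diameter $1/w^6$ (the $1/w^3$-ball lemma), and the $1/e$-ball, being tangent to its $1/w$-ball at center-distance $1/(w^2e)$, has diameter $1/(w^2e^2)$ by the tangency relation $c^2=d_1d_2$. Next, using the earlier pairing lemmas (Lemmas 1--3) I locate each center: the $1/w$-ball centers sit at distance $1/w$ from $F$, the $1/e$-ball center is pinned down by its two prescribed distances $1/e$ and $1/(w^2e)$, and the $1/w^3$-ball centers lie at distance $1/w^3$ from their $1/w$-ball centers along the direction forced by the requirement that one of them be tangent to the $1/e$-ball. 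With all centers and diameters assembled, each segment length is computed by the distance formula (equivalently the law of cosines, as in the upper $e$ inequality), and substituting into $c^2\geq d_1d_2$ and clearing denominators in $w$ and $e$ produces the stated polynomial. The coincidence of the $y$- and $m$-inequalities should reflect that the two corresponding segments are congruent under the symmetry pairing a horoball with its associated horoball, so they give identical algebraic constraints.

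The main obstacle is not the final algebra, which is routine once the data are in hand, but the accurate bookkeeping of the centers, and in particular fixing the \emph{directions} of the segments $v$, $y$, $m$, $p$, $s$, and $k$. Because several of these segments run between a $1/e$- or $1/w^3$-ball and a $P^{\pm 1}$-translate of a ball lower in the hierarchy, their endpoints are cleanest to find by tracking horoballs through the inversions $J$ that swap a given ball with $H_\infty$, exactly as in the proof of the $1/w^3$-ball lemma; an error in the angle entering the law of cosines alters the cross term and corrupts the polynomial. I would therefore handle each segment by first writing down the isometry that simplifies its configuration, reading off the two centers and diameters in those coordinates, and only then substituting into $c^2\geq d_1d_2$. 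Organizing the seven cases as a single table of (diameter, diameter, squared-distance) triples, and verifying row by row that each reduces to the claimed inequality, is the most transparent way to present the argument.
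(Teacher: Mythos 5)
Your proposal takes essentially the same route as the paper: the paper's thresholds ($y \geq 1/(w^2e)$, $s \geq 1/w^6$, $m \geq 1/w^3$, $p \geq 1/w^4$, $k \geq 1/w^6$, and center-distance $1/(w^4e)$ for $v$) are exactly the $\sqrt{d_1 d_2}$ disjointness criterion you state, and the paper likewise locates centers from the earlier lemmas and applies the law of cosines before clearing denominators. Like the paper, you defer the explicit trigonometric bookkeeping for each segment, so your write-up is a correct outline of the same argument rather than a different one.
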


\begin{proof} The first inequality follows from the fact that if $e$ is as small as possible but greater than 0, the $1/w$-ball will be tangent to the corresponding full-sized ball, which occurs for $e=1/w$. For the $v$-inequality, if the $1/w^3$-ball is not coincident with the $1/e$-ball, as occurs for $v> 0$, then the two balls of diameters $\frac{1}{w^6}$ and $\frac{1}{w^2e^2}$ respectively must have centers a distance $\frac{1}{w^4e}$ apart. By the law of cosines, we can determine that $v^2 = (\frac{e}{w^2})^2 - 2\cos(\theta - 2 \mu)$ By applying trig identities, we obtain the given inequality.

    It must be the case $y \geq \frac{1}{w^2e}$, if the two horoballs centered at the ends of the line segment corresponding to $y$ are not to overlap in their interiors. Then,  $y^2 = e^2 + \frac{1}{e^2} - 2 \cos (\lambda - \mu)$ and repeated applications of trig identities and the law of cosines gives the desired $y$-inequality.
    
     For the $s$-inequality, we know that $s \geq \frac{1}{w^6}$ if the corresponding two horoballs are not to overlap in their interiors. Then, $(\frac{s}{2})^2 = (\frac{e}{w^2})^2 = (\frac{w}{2})^2 - \frac{e}{w} \cos (\theta - \mu)$, and trig identities together with the law of cosines ultimately yield the $s$-inequality.
     
     The length $m$ must be at least $\frac{1}{w^3}$ if the two horoballs centered at its ends are not to overlap. Then, $m^2 = \frac{1}{w^6} + e^2 - \frac{2e}{w} \cos (\gamma+ \mu)$, and via trig identities, we again obtain the given $m$-inequality.
     
     It must be the case that $p \geq \frac{1}{w^4}$ for the two horoballs corresponding to $p$ to avoid overlapping in their interiors. Then, $p^2 = \frac{1}{w^2} - m^2 - \frac{2m}{w} \cos (\gamma+\lambda)$, and via trig identities, the law of cosines and substitutions, we obtain the given $p$-inequality.
     
     For the $k$-inequality, the two $1/w^3$-balls have centers with coordinates given by $$\big( \frac{1}{w} \cos\theta + \frac{1}{w^3} \sin(2\theta - \frac{\pi}{2}), \frac{1}{w} \sin \theta - \frac{1}{w^3}\cos (2\theta -\frac{\pi}{2})\big)$$  and  $$\big( \frac{1}{w} \cos\theta - \frac{1}{w^3}\sin(2\theta - \frac{\pi}{2}) + d, \frac{1}{w} \sin \theta + \frac{1}{w^3}\cos (2\theta -\frac{\pi}{2})\big).$$ The fact that their centers must have distance apart at least twice their radius implies $k \geq \frac{1}{w^6}$, and yields the desired $k$-inequality.
\end{proof}

     When one of the given lengths is non-zero and the inequality is satisfied, but the length is not too large, the fact that these two horoballs come relatively close to one another implies that when one of the horoballs is sent to $H_{\infty}$ by an isometry, the other horoball is sent to a relatively large horoball in the diagram. For example, if the inequality is an equality, the two horoballs touch and therefore the new horoball that is generated is full-sized. This will be of use in the following section.

\section{Waist Sizes Greater than 1}

\begin{lemma} \label{knot5_2}Let $M$ be a cusped hyperbolic manifold such that $w > 1$ and the $1/w^3$-ball corresponding to a given $1/w$-ball is coincident with the $1/e$-ball corresponding to a translation by $P$ or $P^{-1}$ of the $1/w$-ball. Then the manifold is the $5_2$ knot complement.
\end{lemma}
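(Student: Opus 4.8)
The plan is to convert the coincidence hypothesis into algebraic constraints on $w$ and $e$, solve for the unique admissible pair, and then reconstruct enough of the packing to recognize the group. First I would use that two coincident horoballs are literally the same horoball, so they have equal diameter and equal center. The $1/w^3$-ball has diameter $\frac{1}{w^6}$ and the $1/e$-ball has diameter $\frac{1}{w^2e^2}$, the two diameters recorded in the proof of the $v$-inequality, so equating them gives
\[
\frac{1}{w^6}=\frac{1}{w^2e^2},\qquad\text{hence}\qquad e=w^2
\]
(taking the positive root). This is precisely the degenerate extreme $v=0$ of the $v$-inequality, so the same configuration must also record the coincidence of the two centers.

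Coincidence of the centers is a two-dimensional condition while $e=w^2$ is a single relation, so one further scalar equation remains to be imposed. I would extract it from the law-of-cosines computation behind the $v$-inequality, now set to $v=0$ with $e=w^2$ substituted; since then $\frac{e}{w^2}=1$, the relevant leg lengths and the angle $\theta-2\mu$ of Figure \ref{distances} simplify, and the condition reduces to a polynomial equation in $w$ alone. Solving it and discarding every root that violates $w>1$ or any admissibility inequality of the preceding lemma---most usefully the lower and upper $e$-inequalities and the $y$-inequality, which already confine $w$ to a short interval above $1$---should leave a single admissible value of $w$, and hence a unique pair $(w,e)$.

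With $(w,e)$ determined, every horoball appearing in Figure \ref{distances} has an explicit center and diameter. The coincident ball is the $1/e$-ball of the translated $1/w$-ball, hence tangent to that translate, so the isometry carrying the translated $1/w$-ball to $H_\infty$ sends the coincident ball to a full-sized ball; this supplies an explicit new generator of $\Gamma$ beyond the cusp translation $P$. I would then assemble $P$, this new generator, and the associated-horoball pairing isometry, check that the group they generate is discrete with exactly the reconstructed packing as its horoball packing, and verify that none of the further horoballs forced by these relations overlap, so that the quotient is a complete finite-volume hyperbolic manifold whose maximal cusp is the one we started with.

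Finally I would identify that quotient as the $5_2$ knot complement, the cleanest route being to verify that the assembled generators satisfy the defining relations of the $5_2$ knot group (equivalently, to match the explicit parabolic representation) and then to invoke Mostow rigidity; alternatively one can compare the computed volume and invariant trace field against the known values for $5_2$. I expect this last identification to be the main obstacle: solving for $w$ and $e$ is a finite computation, but passing rigorously from the local horoball data to a global conclusion requires showing that the group generated really is the full fundamental group and that the packing closes up exactly as for $5_2$, with no additional identifications or overlapping horoballs.
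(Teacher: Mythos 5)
Your first half tracks the paper closely: equating the diameters $\frac{1}{w^6}$ and $\frac{1}{w^2e^2}$ to get $e=w^2$, and then imposing coincidence of the centers as one further scalar equation is exactly what the paper does, except that the paper phrases that second equation via two law-of-cosines relations on the similar triangles of Figure \ref{coincidingballs} (which is how the angle variable gets eliminated), arriving at $w^{14}-2w^{12}+2w^{10}-2w^8+2w^2-1=0$ and the unique admissible root $w=1.150964\ldots$ of $w^6-w^2-1$. One caution: the equality case of the stated $v$-inequality records tangency ($v$ equal to the sum of the two radii), not coincidence ($v=0$), so you must set up the center-coincidence equation directly rather than reuse the inequality at equality; your plan does say $v=0$, so this is only a matter of not conflating the two degenerations.

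The genuine gap is in the second half. Knowing $(w,e)$ pins down only the finitely many horoballs of Figure \ref{distances}; it does not by itself determine the manifold, and the route you propose --- write down a few generators, check the $5_2$ relations, invoke Mostow rigidity --- founders on exactly the point you flag: nothing in that plan shows the generated subgroup is all of $\Gamma$, nor that no further identifications occur. A homomorphism onto the $5_2$ group, or even an isomorphism onto a subgroup of $\Gamma$, is compatible with $M$ being a larger manifold that merely contains this cusp configuration. The paper closes this by a combinatorial rigidity argument that your proposal is missing: the vertical edges over horoball tangencies acquire forced labels (Figure \ref{cusppieces}), the two labelled pieces can fit together in only one way, the three vertical ideal tetrahedra $T_1,T_2,T_3$ so produced have all of their faces paired with one another, and therefore a fundamental domain can contain no other material, since anything else would be disconnected from this part in the quotient. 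That last step is what upgrades ``this horoball pattern embeds in the cusp diagram'' to ``this is the entire manifold,'' and it is the essential idea your proposal lacks; volume or trace-field comparisons cannot substitute for it, since they presuppose you already know which manifold you have.
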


\begin{proof} Under these assumptions, the diameter of the $1/e$-ball, which is $\frac{1}{w^2e^2}$, equals the diameter of the $1/w^3$-ball, which is $1/w^6$. Thus, $e = w^2$. The diagram for the centers of the one set of full-sized horoballs and adjacent $1/w$-balls, and $1/e$-balls must appear as in Figure \ref{coincidingballs}.
\end{proof}

  \begin{figure}{}
\begin{center}
\includegraphics[scale= .8]{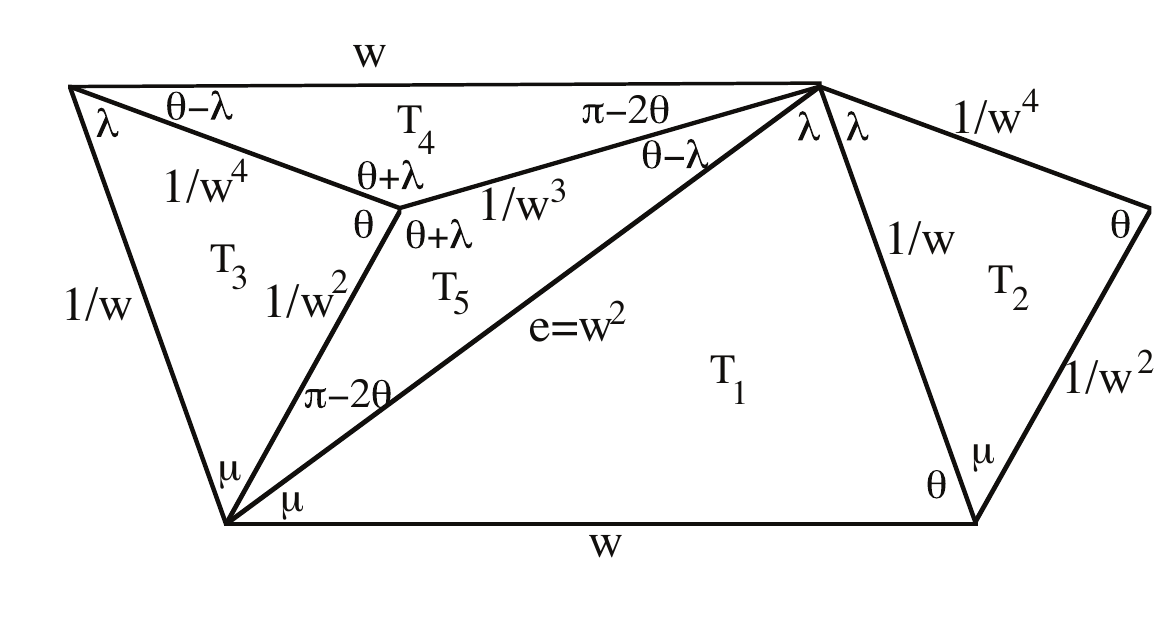}
\caption{Pattern of horoball centers when $1/e$-ball and $1/w^3$-ball coincide.}
\label{coincidingballs}
\end{center}
\end{figure}

    Since the triangles $T_1, T_2$ and $T_3$ are similar, as are the triangles $T_4$ and $T_5$, one can label the angles as in Figure \ref{coincidingballs}.  By the law of cosines we have $w^4 = w^2 + \frac{1}{w^2} - 2 \cos \theta$  and $\frac{1}{w^8} = w^2 + \frac{1}{w^6} - \frac{2}{w^2} \cos (\pi - 2 \theta)$  
Upon solving for w, we find  

$$0 = w^{14} -2w^{12} + 2w^{10} - 2w^8 + 2w^2 - 1 = (w-1)(w+1)(w^3 - w^2 + 1)(w^6 -w^2 -1)$$

The only real root greater than 1 is the root $w = 1.150964\dots$ of  $w^6 -w^2 -1$. This is the waist size of the $5_2$ knot complement. It remains to show that the $5_2$ knot complement is the only manifold with this waist size such that the $1/e$-ball and one of the $1/w^3$-balls coincide.
 
      At this point, two pieces of our cusp diagram must appear as in Figure \ref{cusppieces}, where the labels on the edges are forced on us by the isometries that identify the vertical edge labelled 1 and pointed up to the vertical edge labelled 1 and pointed down (isometry I), as well as by the identifications of 2 up to 2 down, and 3 up to 3 down. Call the southernmost piece the $A$ piece and the northernmost piece the $B$ piece.

  \begin{figure}{}
\begin{center}
\includegraphics[scale= .6]{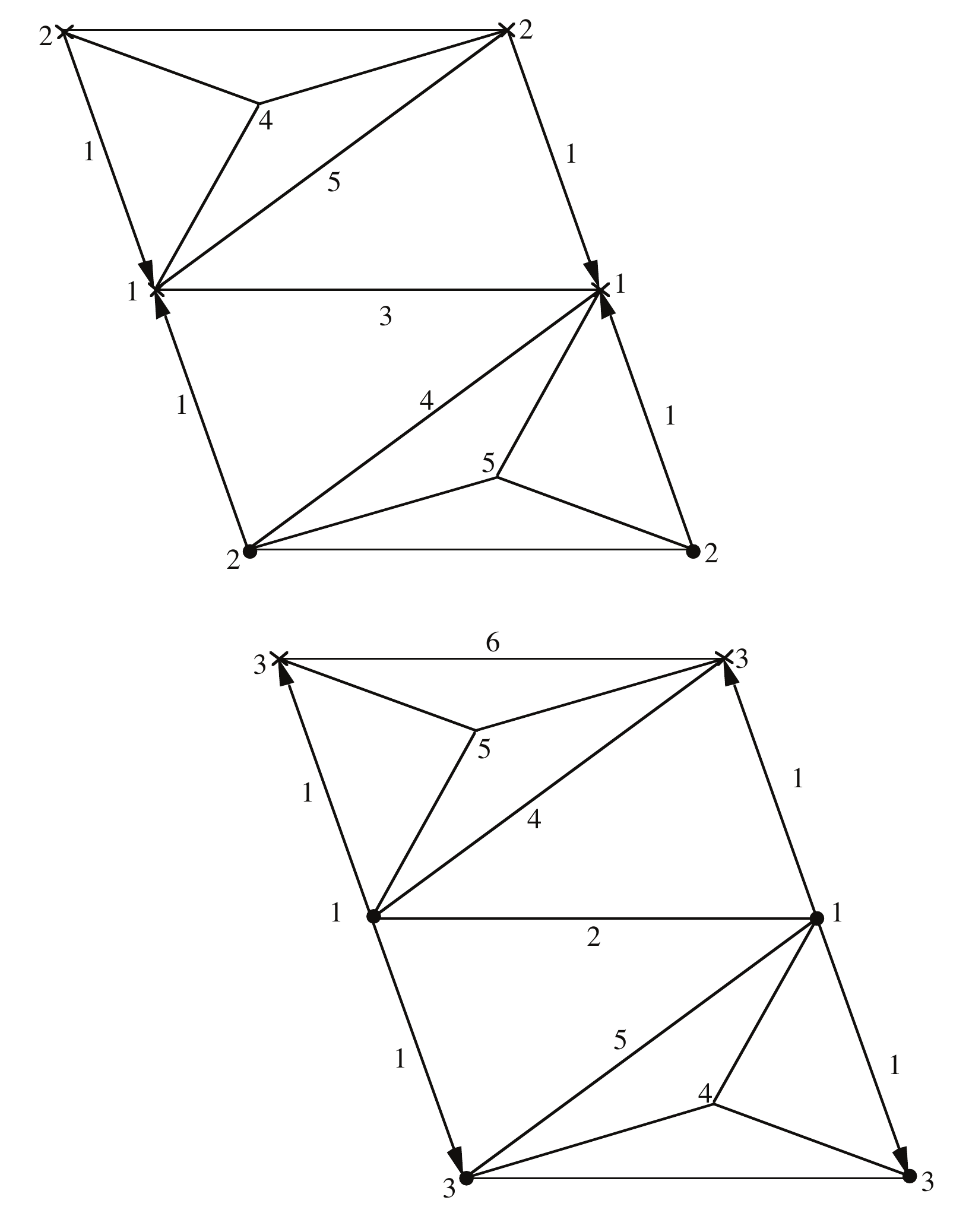}
\caption{Labellings that are forced on us when $w = 1.150964\dots$.}
\label{cusppieces}
\end{center}
\end{figure}

     In particular, notice that the isometry $I$ will identify the ÒhorizontalÓ edge labelled 4 in the $A$ piece with the vertical edge labelled 4 in the $B$ piece. Similarly, $K$ will identify the vertical edge labelled 4 in the $B$ piece with the ÒhorizontalÓ edge labelled 4 in the $B$ piece. Then, $L$ identifies  the vertical edge labelled 4 in the $A$ piece with the ÒhorizontalÓ edge labelled 4 in the $A$ piece.
     
     When the vertical edge labelled 3 up is sent to the vertical edge labelled 3 down, the two edges labelled 6 that shared the bottom endpoint of the vertical edge labelled 3 up will be sent to vertical edges coming out of a pair of $1/w^3$-balls surrounding the $1/w$-ball at the bottom of the vertical edge labelled 3. This forces the label 6 to coincide with the label 4. The fact that there are then two vertical 4 labels, one up and one down, on either side of the downward pointing 3 edge means that the downward pointing 4 label in $B$ must coincide with one of these downward pointing 4 labels. This can only occur if the 2 edge label coincides with the 3 edge label.Thus, the two pieces $A$ and $B$ fit together to give a cusp diagram that exactly coincides with the cusp diagram for the $5_2$ knot, at least up to the horoballs we have so far discussed.  For convenience, we drop the edge labelled 4 (and 6). One checks that the two $1/w^3$-balls touch each other, and therefore that the edge between them is 1. Our picture now appears as in Figure \ref{singlecusp}.

 \begin{figure}{}
\begin{center}
\includegraphics[scale= .6]{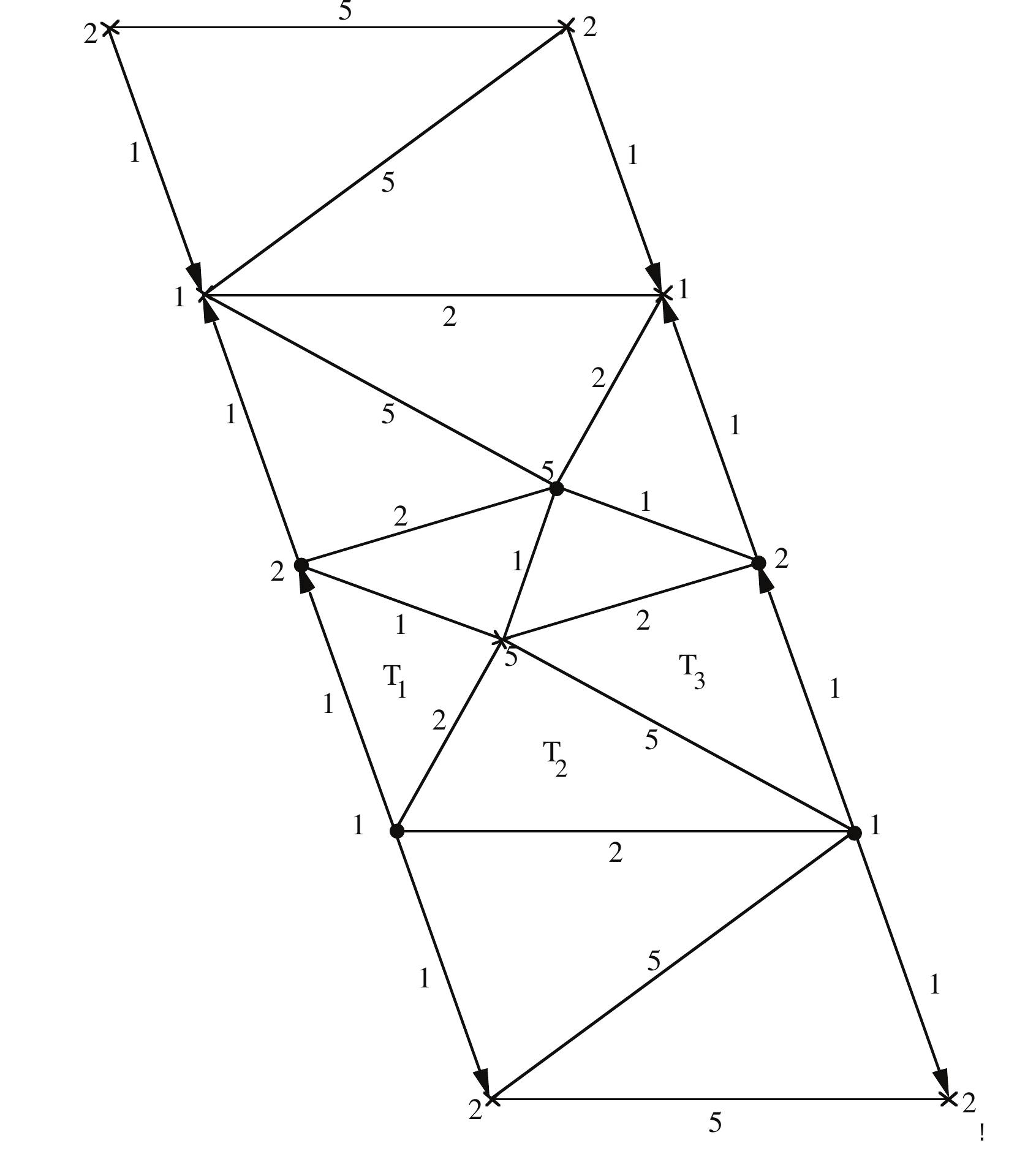}
\caption{Cusp diagram.}
\label{singlecusp}
\end{center}
\end{figure}

Then, all of the vertical ideal tetrahedra labelled in the picture are identified to one of $T_1, T_2$ or $T_3$.  No points in the interiors of these three tetrahedra are identified with one another, as if they were, the tetrahedra themselves would have to be identified in order to respect the horoball packing, forcing more than two vertical edges with the same label, a contradiction. Note that all three of these tetrahedra are isometric with one another.  Since the faces of these tetrahedra are paired with one another, the fundamental domain of the manifold can contain no other material, as it would be disconnected from this part in the quotient.

      Thus any manifold that has the $1/w^3$-ball coincident with the 1/e-ball and waist size $1.150964\dots$  must lift to this picture with three tetrahedra glued to one another along faces in this manner.  The three tetrahedra form a fundamental domain for the action of the group of isometries generated by the appropriate gluings of pairs of faces. The gluings that they inherit on their faces are exactly the gluings that yield the $5_2$ knot complement as in \cite{Adams1} or \cite{CDW}.

\begin{lemma} \label{Whiteheadfilling} If the $y$ and $v$ inequalities are equalities, a corresponding manifold has waist size  $\sqrt[4]{2}$  and the only such manifold is obtained  by  (2,1)-Dehn filling on one cusp of the Whitehead link.
\end{lemma}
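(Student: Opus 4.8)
The plan is to follow the template of Lemma \ref{knot5_2}: translate the two equality hypotheses into explicit tangencies among the horoballs of Figure \ref{distances}, solve the resulting equations for the pair $(w,e)$, and then use the forced tangencies to reconstruct the whole cusp diagram and identify it with a single manifold. The dictionary I would use throughout is the elementary fact that two horoballs of Euclidean diameters $d_1$ and $d_2$ whose centers lie a horizontal distance $D$ apart are tangent precisely when $D^2 = d_1 d_2$ and are disjoint precisely when $D^2 \geq d_1 d_2$; this is exactly the content that turns each inequality in the inequality lemma into a non-overlapping condition. Under this dictionary, the $v$-inequality becoming an equality says that the $1/w^3$-ball is tangent to the $1/e$-ball (diameters $\frac{1}{w^6}$ and $\frac{1}{w^2 e^2}$, centers a distance $\frac{1}{w^4 e}$ apart), and the $y$-inequality becoming an equality says that a $1/w$-ball is tangent to a neighboring $1/e$-ball. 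Combined with the relations already recorded in Lemma \ref{biglemma} and the $1/w^3$-ball lemma, these two tangencies are precisely the $v$- and $y$-polynomials set equal to $0$, giving two equations in the two unknowns $w$ and $e$.

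The second step is to solve this system. Writing $E = e^2$, the $y$-equality is a quadratic in $E$ and the $v$-equality is a cubic in $E$, so I would eliminate $E$ by reducing the cubic modulo the quadratic (equivalently, by computing $\mathrm{Res}_E$ of the two polynomials). After discarding the degenerate factors that correspond to the excluded value $w = 1$, the elimination collapses, with $u = w^4$, to $(u - 2)(u^2 + 3u + 4) = 0$; since $u^2 + 3u + 4$ has negative discriminant it has no real root, so the only admissible solution is $u = w^4 = 2$, that is $w = \sqrt[4]{2}$, and back-substitution into the $y$-equality gives $e = \sqrt[4]{2}$ as well. This already establishes the claimed waist size. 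I would then verify that the configuration with $w = e = \sqrt[4]{2}$ is genuinely realizable, i.e. that imposing these two tangencies introduces no interior overlap elsewhere in the diagram; this fixes every horoball center and diameter in Figure \ref{distances}.

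With $w = e = \sqrt[4]{2}$ pinned down, the argument becomes combinatorial, exactly as in the proof of Lemma \ref{knot5_2}. I would use the ``send a ball to $H_\infty$'' device from the end of Section 2 to turn each tangency into a full-sized ball in the image, thereby forcing the remaining horoballs and all the edge identifications of the cusp diagram. Tracking these identifications should show that the fundamental domain decomposes into a fixed finite collection of ideal tetrahedra with prescribed face pairings, and that no additional material can be attached without producing a repeated vertical edge label and hence violating the packing. Comparing this gluing with the known geometric triangulation of the manifold obtained by $(2,1)$-Dehn filling one cusp of the Whitehead link---for instance via its standard ideal decomposition, or the census data used in \cite{CDW}---then identifies the manifold, and Mostow rigidity guarantees that the complete hyperbolic structure, hence the manifold itself, is unique.

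The elimination and the computation $w = \sqrt[4]{2}$ are routine once the tangency dictionary is set up; the genuine difficulty is the last step. As in the $5_2$ case, the work lies in showing that the forced tangencies really do leave no freedom in the cusp diagram and that the resulting triangulation is exactly the $(2,1)$-Whitehead filling rather than some other manifold sharing the same initial horoball data. Carrying out the bookkeeping of edge labels carefully enough to conclude this rigidity is where I expect the main obstacle to be.
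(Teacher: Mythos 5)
Your overall strategy---turn the two equalities into tangencies, solve for $(w,e)$, then reconstruct the cusp diagram and match the resulting ideal triangulation to a census manifold---is the same as the paper's, and your elimination arriving at $(w,e)=(\sqrt[4]{2},\sqrt[4]{2})$ is a more explicit version of the paper's ``one can check.'' The problem is that the second half, which you defer as ``the main obstacle,'' is where the actual content of the lemma lives, and your sketch is missing the specific idea that makes the cusp diagram rigid. The paper's key observation is that at $(w,e)=(\sqrt[4]{2},\sqrt[4]{2})$ the $m$-inequality is \emph{also} forced to be an equality, so the horoball at the far end of the segment $m$ is a second full-sized ball tangent to one of the pre-existing $1/w$-balls. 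Two full-sized balls sharing a $1/w$-ball must be associated to one another (their opposite $1/w$-balls would otherwise carry the same label up to the cusp action), and this single fact determines the cusp subgroup $\Gamma_\infty$ and shows there is no room for a second pair of full-sized balls. That is what collapses the diagram to Figure \ref{bigone} and produces the three-tetrahedron fundamental domain whose face pairings are then read off and identified with m009, i.e.\ $(2,1)$-filling of the Whitehead link. Without this extra forced tangency (or an equivalent rigidifying fact), the $y$- and $v$-tangencies alone do not pin down the lattice of $\Gamma_\infty$, and the ``bookkeeping of edge labels'' you postpone cannot get started.

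A second, smaller gap: your closing argument that ``no additional material can be attached without producing a repeated vertical edge label'' is the argument from the $5_2$ case, but it does not suffice here. In this configuration the overlap criterion fails to exclude one potential extra identification, namely $T_1$ with $T_3$; the paper rules that out separately by noting it would force a nontrivial element of the fundamental group to have a fixed point in $\mathbb{H}^3$. You would need to supply that step as well.
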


Proof. One can check that the two inequalities are equalities exactly for $(w,e) = (\sqrt[4]{2},\sqrt[4]{2})$. This also forces the $m$-inequality to be an equality, meaning that the $1/m$-ball is a full-sized ball that is tangent to one of the pre-existing $1/w$-balls. Hence, we have two full-sized balls sharing a $1/w$-ball. These two full-sized balls must be associated, as if the $1/w$-ball they share is a 2 up, their opposite $1/w$-balls are both 2 down, and therefore identical up to the action of the cusp subgroup. This determines the cusp subgroup and shows that there is not enough room in the cusp diagram for a second pair of full-sized balls.  Thus the cusp diagram must appear as in 
Figure \ref{bigone}.

\begin{figure}{}
\begin{center}
\includegraphics[scale= .6]{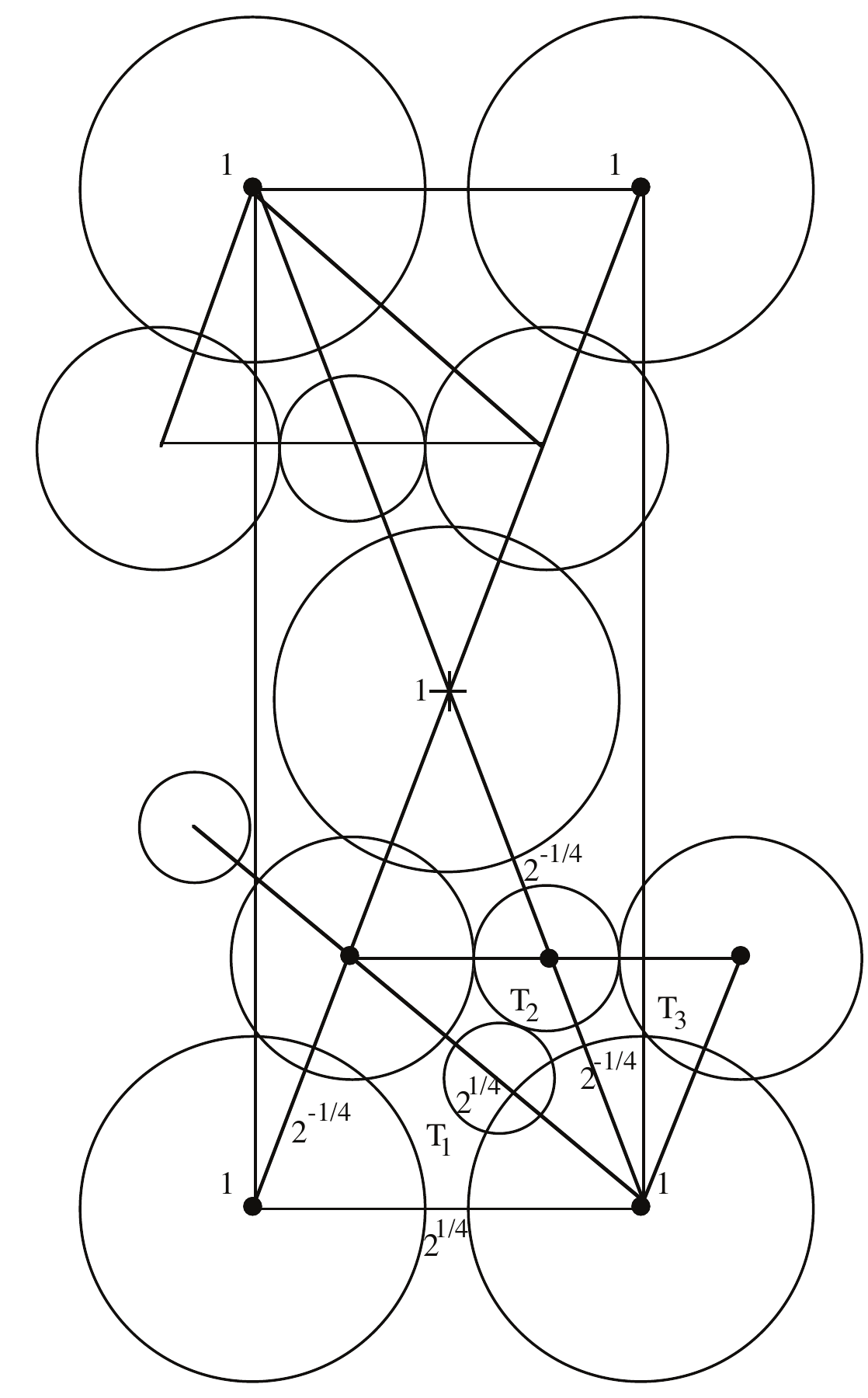}
\caption{Cusp diagram forced by $y$ and $v$ non-zero but minimal.}
\label{bigone}
\end{center}
\end{figure}

The isometries taking 1 up to 1 down and 2 up to 2 down ensure that three vertical tetrahedra $T_1, T_2$ and $T_3$ form a fundamental domain for the cusp diagram. Those same isometries give us the face identifications on those tetrahedra as in Figure \ref{identifications}.

\begin{figure}{}
\begin{center}
\includegraphics[scale= .6]{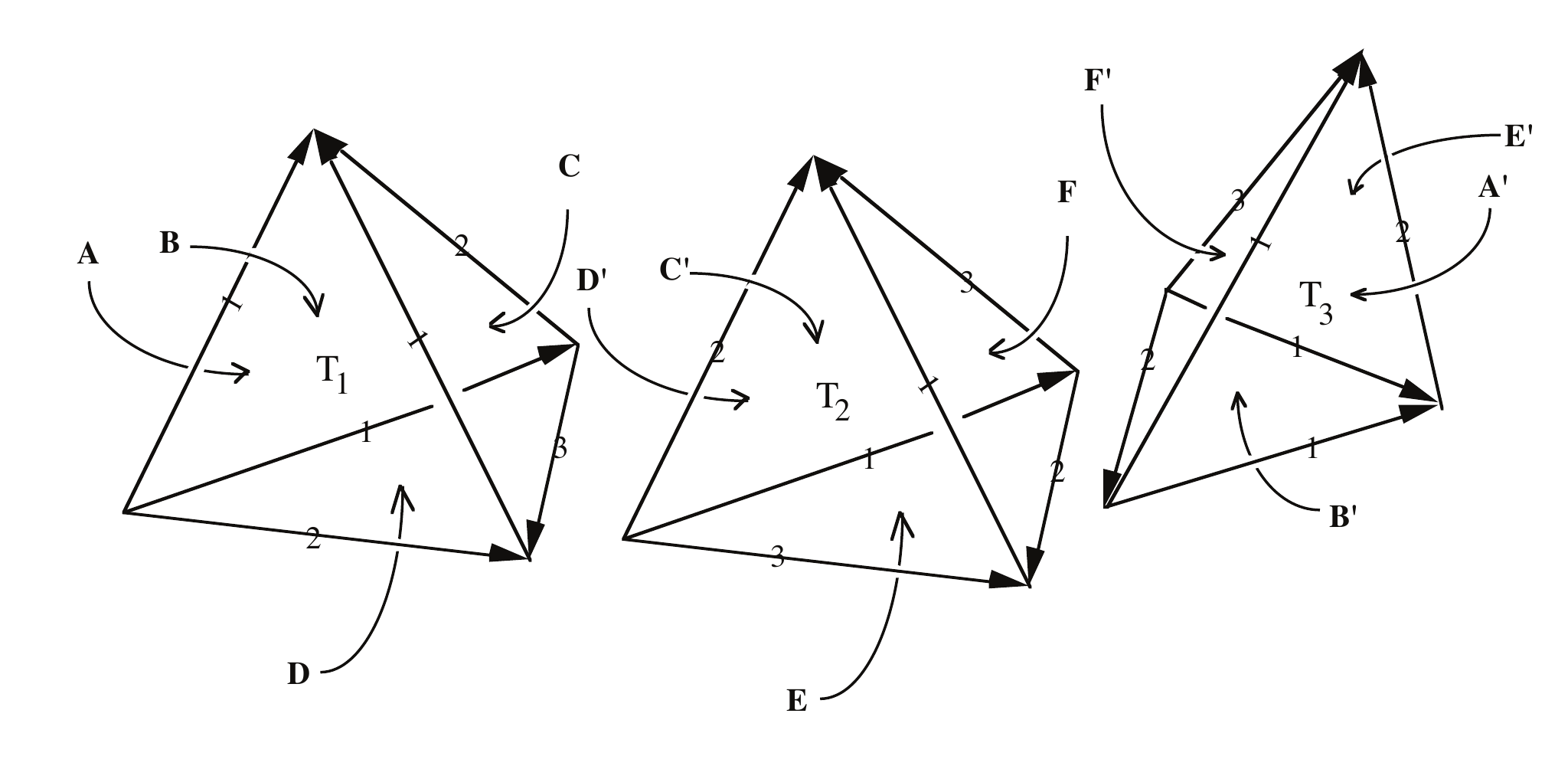}
\caption{Identifications on three ideal tetrahedra that yield m009.}
\label{identifications}
\end{center}
\end{figure}

The resulting manifold is m009 in the census of SnapPy (cf. \cite{CDW}), or (2,1)-Dehn surgery on the Whitehead link (or (-2,1)-surgery on its reflection.)  No other points can be identified in the interiors of the three tetrahedra, as if there were such identifications,  horoballs would overlap without being identical except in the case of identifying $T_1$ with $T_3$. However, if those two tetrahedra were identified, it would cause fixed points in $\mathbb{H}^3$ of nontrivial isometries in the fundamental group, a contradiction.

\begin{thm} The only manifolds of waist size at most  $\sqrt[4]{2}$ are the figure-eight knot complement, the $5_2$ knot complement and the manifold obtained by (2,1)-surgery on the Whitehead link.
\end{thm}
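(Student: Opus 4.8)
The plan is to reduce the theorem to the two rigidity results already in hand, Lemma \ref{knot5_2} and Lemma \ref{Whiteheadfilling}, by showing that a waist size in the range $1 < w \le \sqrt[4]{2}$ forces the horoball inequalities of Section 2 into one of exactly two degenerate configurations. First I would dispose of the boundary case $w = 1$, which by the main theorem of \cite{Adams3} is realized only by the figure-eight knot complement. Since the waist sizes of the $5_2$ knot complement and the $(2,1)$-filling of the Whitehead link are the root $1.150964\dots$ of $w^6 - w^2 - 1$ and $\sqrt[4]{2} = 1.189207\dots$ respectively, both lie in $(1, \sqrt[4]{2}]$; it therefore suffices to prove that these are the only manifolds that occur there.

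So suppose $M$ is a cusped hyperbolic manifold with $1 < w \le \sqrt[4]{2}$ and fix a maximal cusp. By Lemma \ref{biglemma} and the $1/w^3$-ball lemma, the full-sized balls, $1/w$-balls, $1/e$-balls and $1/w^3$-balls all appear in the positions recorded in Figure \ref{distances}, for any maximal cusp whether of finite or infinite volume; hence $(w,e)$ satisfies every inequality of Section 2 whose associated distance is nonzero. The argument then splits on the distance $v$. If $v = 0$, so that a $1/w^3$-ball coincides with the $1/e$-ball of an adjacent ($P^{\pm1}$-translated) $1/w$-ball, Lemma \ref{knot5_2} already forces $M$ to be the $5_2$ knot complement. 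I would therefore assume $v > 0$ and aim to show that the $y$- and $v$-inequalities are then forced to be simultaneous equalities, so that Lemma \ref{Whiteheadfilling} identifies $M$ with the $(2,1)$-filling of the Whitehead link.

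The heart of the matter is the following semialgebraic claim: the region cut out in the $(w,e)$-plane by the Upper and Lower $e$-inequalities together with the $v$-, $y$-, $s$-, $m$-, $p$- and $k$-inequalities contains no point with $1 < w < \sqrt[4]{2}$ and $v > 0$, and meets the line $w = \sqrt[4]{2}$ only at $(\sqrt[4]{2}, \sqrt[4]{2})$, where $y$ and $v$ are both tight. I would establish this by fixing $w$ and intersecting the admissible $e$-intervals coming from the separate inequalities. The Lower $e$-inequality gives $e \ge 1/w$, while the $y$-inequality is a quadratic in $e^2$ whose feasible set is $\{e^2 \le E_-(w)\} \cup \{e^2 \ge E_+(w)\}$; a direct check shows that at $w = \sqrt[4]{2}$ the relevant threshold $E_+ = \sqrt{2}$ is attained precisely where the $v$-inequality becomes an equality. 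The plan is then to use the discriminants and resultants of the $v$- and $y$-polynomials to verify that for $1 < w < \sqrt[4]{2}$ the $e$-values allowed by the $y$-inequality lie strictly above those allowed by the $v$-inequality, so that their intersection is empty, while the remaining inequalities $s$, $m$, $p$ and $k$ are used only to rule out the secondary branch $e^2 \le E_-$ and the other coincidence loci (such as $m = 0$ or $s = 0$) as possible sources of manifolds with $w \le \sqrt[4]{2}$.

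The main obstacle is exactly this region analysis. The governing polynomials have degree as high as $14$ in $w$, and the delicate point is to show that the admissible $e$-window closes off at $w = \sqrt[4]{2}$ and not slightly before or after; this demands careful bookkeeping of which inequalities are binding and a genuine exclusion of the secondary quadratic branch and the spurious coincidences, rather than a single clean estimate. Once the semialgebraic claim is proved, the theorem is immediate: any $M$ with $1 < w \le \sqrt[4]{2}$ has either $v = 0$, whence the $5_2$ knot complement by Lemma \ref{knot5_2}, or $v > 0$ with $(w,e) = (\sqrt[4]{2}, \sqrt[4]{2})$ and $y$, $v$ both tight, whence the $(2,1)$-filling of the Whitehead link by Lemma \ref{Whiteheadfilling}; together with the figure-eight knot complement at $w = 1$, these three manifolds exhaust the possibilities.
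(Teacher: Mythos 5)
Your overall architecture matches the paper's: dispose of $w=1$ via \cite{Adams3}, send $v=0$ to Lemma \ref{knot5_2}, and for $v\neq 0$ run a region analysis in the $(w,e)$-plane to show the four governing inequalities (upper and lower $e$, $y$, $v$) exclude all $w<\sqrt[4]{2}$. The paper does exactly this covering argument, decomposing the strip $1\le w<\sqrt[4]{2}$ into four regions each killed by one inequality. But your central semialgebraic claim --- that the feasible region meets the line $w=\sqrt[4]{2}$ \emph{only} at $(\sqrt[4]{2},\sqrt[4]{2})$ --- is false, and this is where the real work of the theorem lives. The four inequalities admit \emph{three} corner points at $w=\sqrt[4]{2}$: $(\sqrt[4]{2},\sqrt{3}/\sqrt[4]{2})$, $(\sqrt[4]{2},1/\sqrt[4]{2})$, and $(\sqrt[4]{2},\sqrt[4]{2})$, and only the third feeds into Lemma \ref{Whiteheadfilling}.

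You propose to eliminate the extra loci using the $s$-, $m$-, $p$- and $k$-inequalities, but this cannot work as a purely semialgebraic exclusion, because those inequalities are \emph{conditional}: each one holds only when the corresponding distance in Figure \ref{distances} is nonzero. At the first corner point the $k$-inequality fails, and at the second the $s$-inequality fails; the correct conclusion is not that no manifold exists there, but that the two horoballs at the ends of the $k$ (respectively $s$) segment must coincide. The paper then has to analyze the resulting degenerate cusp diagrams geometrically: at point I the coincidence forces two identically-labelled but oppositely-oriented vertical edges to be identified, contradicting freeness of the action; at point II the forced diagram identifies all eight vertical ideal tetrahedra, so the quotient is an orbifold or nonorientable manifold rather than an orientable hyperbolic manifold. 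These combinatorial/group-theoretic arguments about edge labellings and fixed points are an essential missing ingredient in your proposal; without them the two spurious parameter values are not ruled out.
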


Proof. If $y = 0$, then the $1/e$-ball is a $1/w$-ball. Hence, $e = 1/e$, so $e = 1$ and two $1/w$-balls that are identified by $P$ are tangent to one another, forcing $w = 1$. Hence, by Theorem 3.1 of \cite{Adams3}, the resulting manifold must be the figure-eight knot complement. If $v = 0$, then the corresponding manifold must be either the figure-eight knot complement or the $5_2$ knot complement by Lemma \ref{knot5_2}. 

 \begin{figure}{}
\begin{center}
\includegraphics[scale= .6]{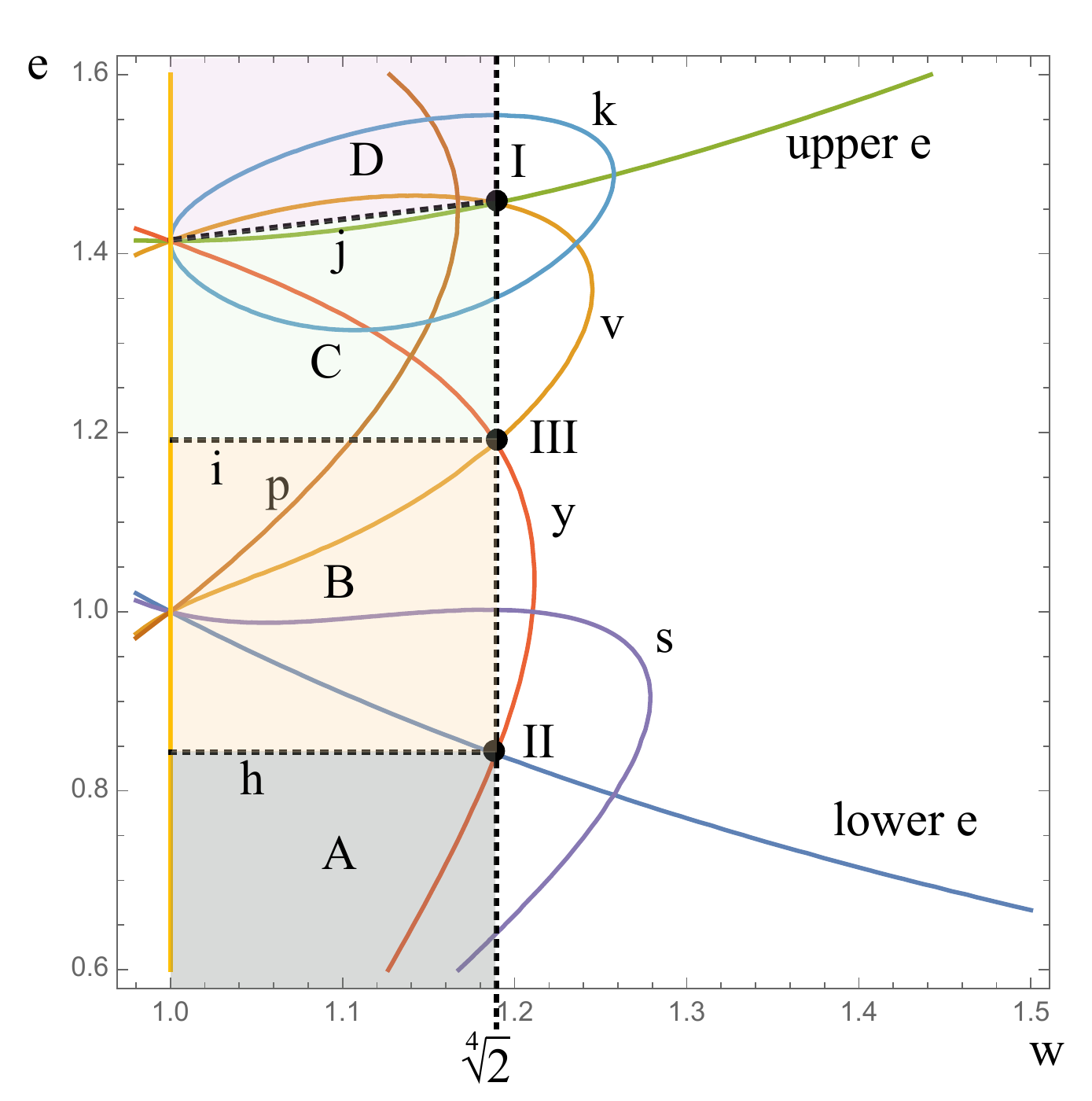}
\caption{Inequalities in the $we$-plane.}
\label{inequalities}
\end{center}
\end{figure}

      If both $y \neq 0$ and $v \neq 0$, then the inequalities from $y, v$ and the upper and lower bounds for $e$ apply. (See Figure \ref{inequalities}.) We prove, as can be seen in Figure \ref{inequalities}, that these four inequalities cover the region in the $we$-plane given by $1 \leq w < \sqrt[4]{2}$. 
      

We consider the four regions $A, B, C$ and $D$. The line segment $h$ given by $e = 1/\sqrt[4]{2}$ intersects the  lower $e$-curve and $y$-curve at one and the same point, when $w = \sqrt[4]{2}$.  Therefore rectangle $A$ is covered by the lower $e$-inequality. 

The line segment $i$ given by $e = \sqrt[4]{2}$ intersects both the $y$-curve and $v$-curve at one and the same point, when $w = \sqrt[4]{2}$. Therefore rectangle $B$ is covered by the $y$-inequality. 

Solving for the intersections of the two regions bounded by the upper $e$-equality and the $v$-equality, we find points $(1, \sqrt 2)$ and $(\sqrt[4]{2}, \frac{\sqrt{3}}{\sqrt[4]{2}})$. The line segment labelled $j$ is the line segment between these two points, forming part of the boundary of trapezoids $C$ and $D$.

To see that $j$ lies above the upper $e$-equality, we can solve $w^4 - e^2w^2 + 1 =0$ for $e$ and then show it is concave down as a function of $e$.  Hence, region $D$ is covered by the upper $e$-inequality.   

To see $j$ lies below the $v$-curve,  we use implicit differentation to compute $e_w$. Then we check that at $(1,\sqrt 2)$, $e_w = \frac{1}{\sqrt 2}$, which is greater than the slope of $j$. At $(\sqrt[4]{2}, \frac{\sqrt{3}}{\sqrt[4]{2}})$, $e_w$  is negative. Then using Mathematica, we check that there is only one place along the $v$-curve between these two points where $e_w$ equals the slope of $j$. Hence the $v$-curve must lie above the straight line segment $j$.  Hence trapezoid $C$ is covered by the $v$-inequality.   
         
  Thus, the  minimum possible waist size is then  $\sqrt[4]{2}$ , which occurs for three possible choices of $w$ and $e$.

 In the first case, $(w,e) = (\sqrt[4]2, \frac{\sqrt{3}}{\sqrt[4]{2}})$ (labelled $I$ in Figure \ref{inequalities}), the $k$-inequality is not satisfied, and hence the two horoballs at the ends of the segment labelled $k$ in Figure \ref{distances} must be identical. This choice of parameters then yields a cusp diagram as in Figure \ref{DiagramI}. This forces the centers of two $1/w^3$-balls corresponding to the same $1/w$-ball to have the same $x$-coordinates, and therefore one ball is sent to the other under the isometry $P$. However, the vertical edges coming out of their centers are identically labelled but oppositely oriented, a contradiction to the fact all isometries must be fixed point free.

 \begin{figure}{}
\begin{center}
\includegraphics[scale= .6]{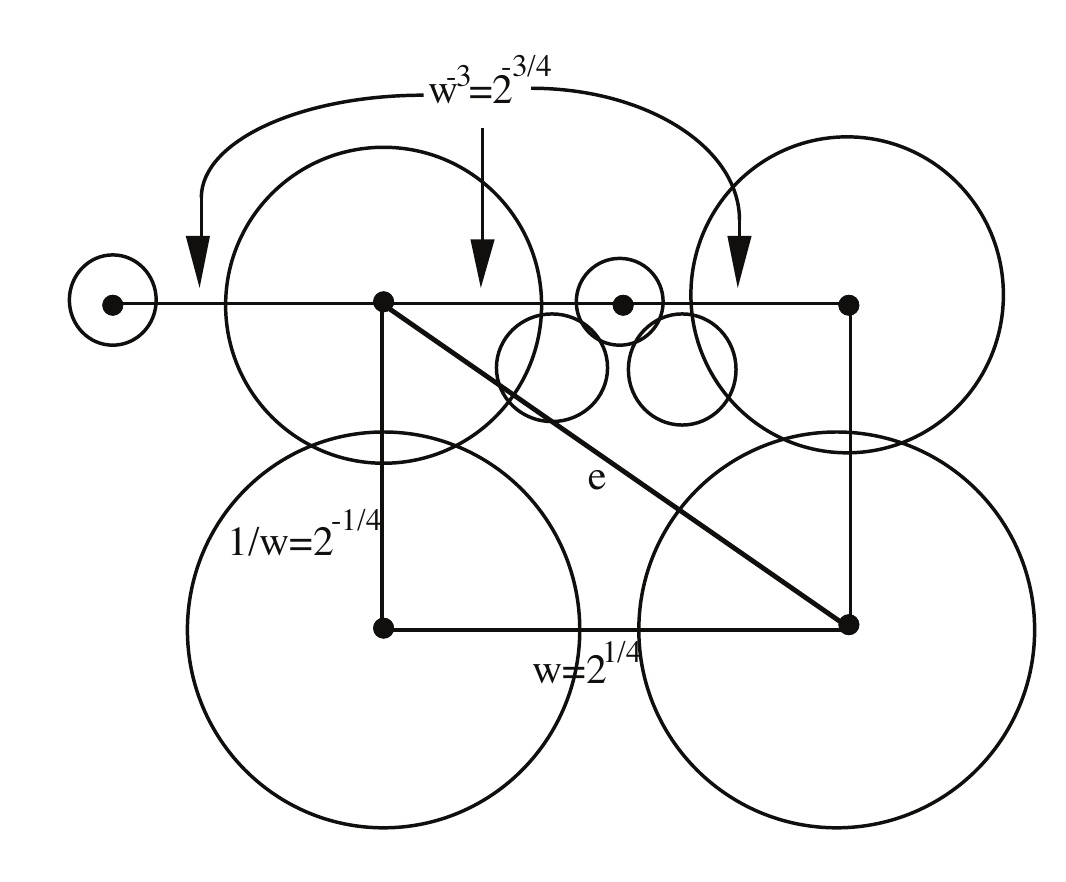}
\caption{Diagram corresponding to $I$.}
\label{DiagramI}
\end{center}
\end{figure}

     In the second case, $(w,e) = (\sqrt[4]2, \frac{1}{\sqrt[4]2})$ (labelled II in Figure \ref{inequalities}), we show that the ultimate result must also be an orbifold rather than a manifold. Since $e$ is as short as possible, the $1/e$-balls must be full-sized. Moreover, since the $s$-inequality is not satisfied, the two $1/w^3$-balls at the ends of the $s$ segment in Figure \ref{distances} must be identical. Since the $y$-inequality is exactly satisfied, the $1/w$-ball is tangent to the $1/e$-ball. Thus, one can see that our cusp diagram has two pieces, the first appearing  as in Figure \ref{pieceI}, and the second appearing the same except for the labels. Label the edge between the endpoints of two vertical down pointing 4 edges by a 6 edge pointed left to right. Applying the isometry that takes 4 down to 4 up, we see that the full-sized ball corresponding to 4 up has two $1/w$-balls labelled with 6 up and 6 down. These can only be fit in if the 6 edge coincides with the 2 edge. This forces the 4 edge to be the 1 edge and the diagram to appear as in Figure \ref{piece2}.

\begin{figure}{}
\begin{center}
\includegraphics[scale= .6]{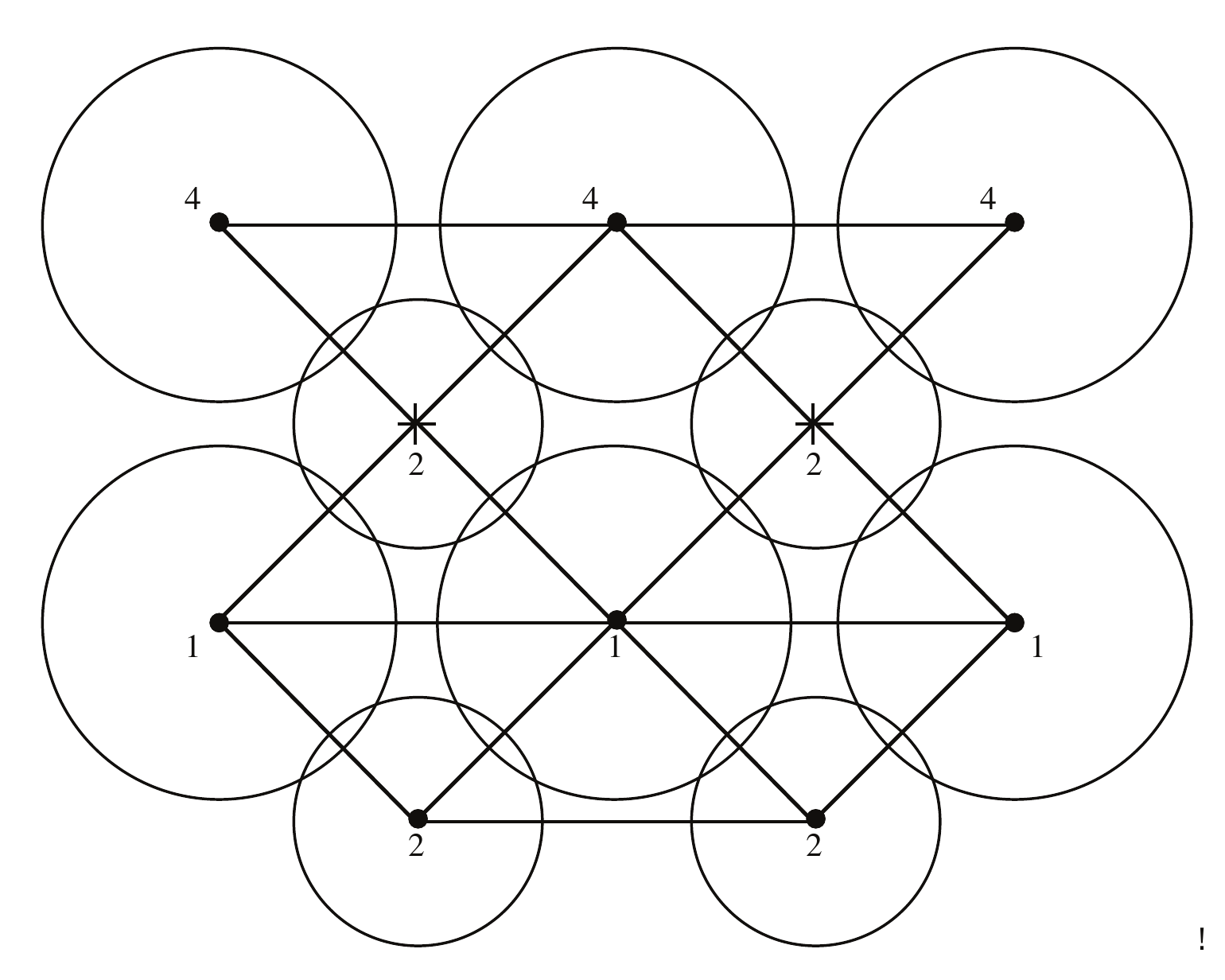}
\caption{One of two identical pieces except for the labelings for Case II.}
\label{pieceI}
\end{center}
\end{figure}

\begin{figure}{}
\begin{center}
\includegraphics[scale= .6]{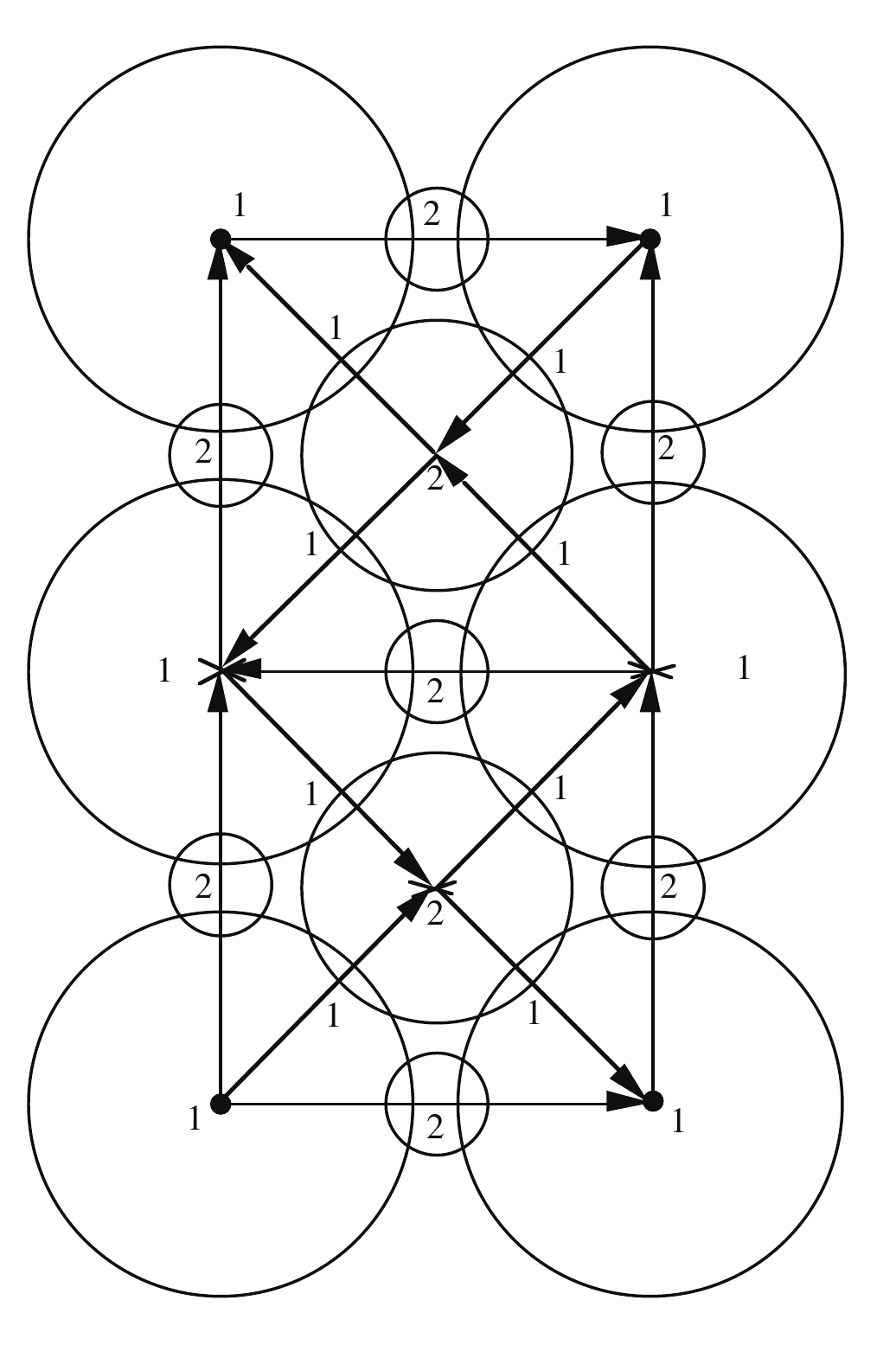}
\caption{Cusp diagram corresponding to II. }
\label{piece2}
\end{center}
\end{figure}

The labelings in Figure \ref{piece2} are forced by the isometry P and the isometries that take 1 up to 1 down and 2 up to 2 down.  Similarly to the arguments appearing previously in this paper, one sees that all eight of the vertical ideal tetrahedra are identified, meaning that the resulting quotient must be an orbifold or nonorientable manifold.

     The third case, $(w,e) = (\sqrt[4]2, \sqrt[4]2)$ (labelled III in Figure \ref{inequalities}), is the one corresponding to Lemma  \ref{Whiteheadfilling}, thereby showing that the manifold must be (2,1)-Dehn filling on one component of the Whitehead link.

\section{An Application}

     One application of the results on waist size is for the length of unknotting tunnels for 2-cusped hyperbolic manifolds. A cusped hyperbolic manifold is said to have tunnel number one if the corresponding compact manifold with toroidal boundaries contains a properly embedded arc (called an unknotting tunnel) such that the complement of a neighborhood of the arc is a genus two handlebody.  In \cite{Adams2}, it was shown that for a tunnel number one 2-cusped finite volume hyperbolic 3-manifold, any unknotting tunnel is isotopic to a vertical geodesic of length less than $\ln(4)$ where the length of the geodesic is measured for that part of it outside a choice of cusps with disjoint interiors.  In fact, Theorem 4.4 of that paper shows that the length of an unknotting tunnel is less than $\ln(4/w^2)$ where $w$ is the minimum of the current waist sizes for a choice of the two cusps with disjoint interiors.

\begin{corollary}An unknotting tunnel in a tunnel number one two-cusped hyperbolic 3-manifold is isotopic to a vertical geodesic of length less than $\ln(2^{7/4})$.
\end{corollary}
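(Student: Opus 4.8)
The plan is to deduce the Corollary by combining the classification in the preceding Theorem with Theorem 4.4 of \cite{Adams2}, which already bounds the length of an unknotting tunnel in a tunnel number one two-cusped manifold by $\ln(4/w^{2})$, where $w$ is the minimum of the current waist sizes for a choice of the two cusps with disjoint interiors. The only new ingredient I need is a lower bound on $w$, and the Theorem supplies it almost immediately. First I would record the topological observation driving everything: each of the three manifolds listed in the Theorem --- the figure-eight knot complement, the $5_2$ knot complement, and the manifold obtained by $(2,1)$-surgery on one component of the Whitehead link (the census manifold m009) --- has exactly one cusp. Hence no two-cusped hyperbolic $3$-manifold occurs in that list, so by the contrapositive of the Theorem \emph{every} cusp of a two-cusped hyperbolic $3$-manifold has maximal (single-cusp) waist size strictly greater than $\sqrt[4]{2}=2^{1/4}$.

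Next I would feed this into the estimate $\ln(4/w^{2})$ from \cite{Adams2}. The subtlety, and the reason the final constant is weaker than a naive substitution would suggest, is that $w$ there is the \emph{current} waist size, measured for a single choice of disjoint horoball neighborhoods of the two cusps, and the two cusps cannot in general be expanded to their individual maximal sizes simultaneously. My approach would be to balance the two expansions: shrink each individually maximal cusp along the one-parameter family of disjoint configurations until the two current waist sizes are equal, so that the minimum current waist size is controlled by the two individual maxima (each exceeding $2^{1/4}$) together with the total amount of shrinking forced by making the cusps disjoint. I would argue that this shrinking costs at most a factor of $2^{1/8}$, leaving the minimum current waist size at least $2^{1/4}/2^{1/8}=2^{1/8}$.

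Finally, substituting $w\ge 2^{1/8}$ into the bound gives
$$\ln\!\left(\frac{4}{w^{2}}\right)\;\le\;\ln\!\left(\frac{4}{2^{1/4}}\right)=\ln\!\left(\frac{2^{2}}{2^{1/4}}\right)=\ln\!\left(2^{7/4}\right),$$
and since the inequality in Theorem 4.4 of \cite{Adams2} is strict, the tunnel length is strictly less than $\ln(2^{7/4})$, as claimed. The one-cusped observation and this closing arithmetic are routine; the main obstacle is the middle step, namely quantifying exactly how much waist size must be sacrificed when the two cusps are shrunk to have disjoint interiors --- that is, showing the minimum current waist size can always be kept at or above $2^{1/8}$ rather than merely positive, equivalently that the total hyperbolic distance by which the two individually maximal cusps must be pushed in to become disjoint is at most $\tfrac14\ln 2$. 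This is the only place where genuine two-cusp geometry, as opposed to the classification alone, enters.
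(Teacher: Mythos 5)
Your overall strategy matches the paper's --- invoke Theorem 4.4 of \cite{Adams2}, observe that the three exceptional manifolds are one-cusped so each maximal cusp of a two-cusped manifold has waist size greater than $2^{1/4}$, and then bound the minimum \emph{current} waist size below by $2^{1/8}$ --- and your closing arithmetic is exactly right. But the one step you flag as ``the main obstacle'' is precisely the content of the proof, and you leave it as an assertion (``I would argue that this shrinking costs at most a factor of $2^{1/8}$''). That is a genuine gap, not a routine detail. Moreover, your proposed reformulation of that step --- that the total hyperbolic distance by which the two \emph{individually maximal} cusps must be pushed in to become disjoint is at most $\tfrac14\ln 2$ --- is not what the paper establishes and is not obviously true as stated; it is not the right quantity to control.

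What actually closes the gap is the following asymmetric trade-off. Take $C_1$ maximal, with waist size $a_0>2^{1/4}$, and expand $C_2$ until it first touches either itself or $C_1$. In the first case $C_2$ is maximal and has waist size greater than $2^{1/4}$ and you are done. In the second case, the current waist size $b_0$ of $C_2$ satisfies $b_0\geq 1$: this is the universal ``waist size at least $1$'' bound applied to a cusp that touches \emph{another} cusp rather than itself (in the horoball picture, $C_2$ lifts to $H_\infty$ with full-sized horoballs covering $C_1$ tangent to it, and the shortest parabolic fixing $\infty$ must move each such horoball off itself). This is the only place genuine two-cusp geometry enters, and it is the input you are missing. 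Now shrink $C_1$ and expand $C_2$ by a parameter $h$, which scales the waist sizes to $a_0/h$ and $b_0h$; they agree when $h=\sqrt{a_0/b_0}$, at which point both equal the geometric mean $\sqrt{a_0b_0}>\sqrt{2^{1/4}\cdot 1}=2^{1/8}$. One also checks that the worst case for the minimum of the two waist sizes is the balanced one (if $C_2$ becomes maximal first, the minimum is only larger). With $w\geq 2^{1/8}$ in hand, your substitution into $\ln(4/w^{2})$ gives $\ln(2^{7/4})$ as you computed.
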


\begin{proof} Choose one cusp $C_1$ to be maximal with waist size $a_0$ greater than $2^{1/4}$ and the other cusp $C_2$ as large as possible without overlapping the first cusp on its interior.  The second cusp $C_2$ will either touch itself and therefore have waist size greater than $2^{1/4}$  or it will touch $C_1$ and have waist size $b_0$ at least 1. In the second case, expand $C_2$ while shrinking $C_1$ until they have same waist size. If $C_2$ becomes maximal before they have the same waist size, the minimum of the two waist sizes is larger than if they have the same waist size. So in all cases, the minimum for $w$ occurs when the two cusps have the same waist size. The expansion of $C_2$ and shrinking of $C_1$ can be parametrized by $h$, where the two cusps have the same waist size when $\frac{a_0}{h} = b_0h$.  Hence, at this time, $h = \sqrt{\frac{a_0}{b_0}}$ and both cusps have waist size  $\sqrt{a_0b_0}$. Thus,  $w$ is at least $2^{1/8}$, giving a universal bound for the length of unknotting tunnels of $\ln(2^{7/4})$.
\end{proof}

\end{document}